\newtheorem{thm}{Theorem}
\newtheorem{prop}{Proposition}[section]
\newtheorem{lem}[prop]{Lemma}
\newtheorem{rmk}[prop]{Remark}
\theoremstyle{definition}
\newtheorem{defn}[prop]{Definition}
\newcommand{\T}{\mathbb{T}}
\newcommand{\R}{\mathbb{R}}
\newcommand{\C}{\mathbb{C}}
\newcommand{\Z}{\mathbb{Z}}
\title{Intermittency of Riemann's non-differentiable function through the fourth-order flatness}
\author{Alexandre Boritchev$^1$}
\address{$^1$Universit\'e Claude Bernard -- Lyon 1, CNRS UMR 5208, Institut Camille Jordan, F-69622 Villeurbanne, France.}
\email{$^1$\tt alexandre.boritchev@gmail.com}
\author{Daniel Eceizabarrena$^2$}
\address{$^2$Department of Mathematics and Statistics, University of Massachusetts Amherst, Amherst MA 01003, United States.}
\email{$^2$\tt eceizabarrena@math.umass.edu}
\author{Victor Vila\c ca Da Rocha$^3$}
\address{$^3$School of Mathematics, Georgia Institute of Technology, Atlanta, Georgia 30332, United States.}
\email{$^3$\tt vrocha3@gatech.edu}
\subjclass{42A16, 76B47, 76F05}
\keywords{Intermittency, flatness, turbulence, multifractal formalism, structure functions, Riemann’s non-differentiable function}
\begin{document}

\begin{abstract}
Riemann's non-differentiable function is one of the most famous examples of continuous but nowhere differentiable functions, but it has also been shown to be relevant from a physical point of view. Indeed, it satisfies the Frisch-Parisi multifractal formalism, which establishes a relationship with turbulence and implies some intermittent nature. It also plays a surprising role as a physical trajectory in the evolution of regular polygonal vortices that follow the binormal flow. With this motivation, we focus on one more classic tool to measure intermittency, namely the fourth-order flatness, and we refine the results that can be deduced from the multifractal analysis to show that it diverges logarithmically. 
We approach the problem in two ways: with structure functions in the physical space and with high-pass filters in the Fourier space.
\end{abstract}

\maketitle

%\tableofcontents

\section{Introduction and motivation}
Riemann's non-differentiable function
\begin{equation}\label{OriginalRiemannFunction}
f(t) = \sum_{n=1}^{\infty}{\frac{\sin{ \left(n^2 t\right)}}{n^2}}, \quad t\in [0,2\pi],
\end{equation} 
is a celebrated example of a continuous but almost nowhere differentiable function. Weierstrass claimed \cite{Weierstrass} that it was introduced by Riemann in the 1860s, and since then it has been widely studied from an analytic perspective \cite{Duistermaat,Gerver1,Gerver2,Hardy,HolschneiderTchamitchian,Jaffard}. Intermittency, however, appears in the study of fully developed turbulence, closely related to multifractality. What is then the relationship between these two concepts which seem to be unrelated? 

On the one hand, there is a meeting point of the analytic study of Riemann's function and of the multifractal study of the velocity field of turbulent fluids, which is the Frisch-Parisi multifractal formalism \cite{FrischParisi}. It was designed as a heuristic formula to connect the regularity of the velocity of the fluid with the different scalings of its increments, but its mathematical range of validity was unclear and a program began in the 1990s with the objective of rigorously establishing it \cite{DaubechiesLagarias,Eyink,Jaffard, JaffardMultifractal,JaffardMultifractal2,JaffardOldFriends}. In particular, Jaffard \cite{Jaffard} gave the precise H\"older regularity of Riemann's function and proved that it satisfies the multifractal formalism, thus establishing an analytical relationship with turbulence.

One more connection is the unexpected appearance of Riemann's function in the dynamics of vortex filaments. In \cite{HozVega},  De la Hoz and Vega showed numerically that  the complex-valued version
\begin{equation}\label{RiemannFunction_Trajectory}
    \phi(t) = 2\pi i t + \sum_{n=1}^{\infty}{\frac{e^{2\pi i n^2 t}}{n^2}}, \quad t \in [0,1],
\end{equation}
appears as the trajectory of the corners of polygonal vortex filaments that follow the binormal flow. In Figure~\ref{Figure_Plot_Trajectory} we show the image of $\phi$ in the complex plane, while in  \cite{KumarVideos}  the reader can find a video of the numeric simulations, which captures the qualitative phenomenology of experiments done in \cite{ChicagoPeople}. For convenience, in  Figure~\ref{Figure_Evolution} we show a few snapshots of those numerical simulations. The trajectory in blue there and the image of the function in Figure~\ref{Figure_Plot_Trajectory} are surprisingly similar. 
Recently, Banica and Vega \cite{BanicaVega2020} proved the first rigorous result in this direction in a slightly different setting, where they consider a modification of the polygon.

\begin{figure}[h]
\begin{center}
\begin{subfigure}{0.45\textwidth}
  \centering
  \includegraphics[width=0.97\linewidth]{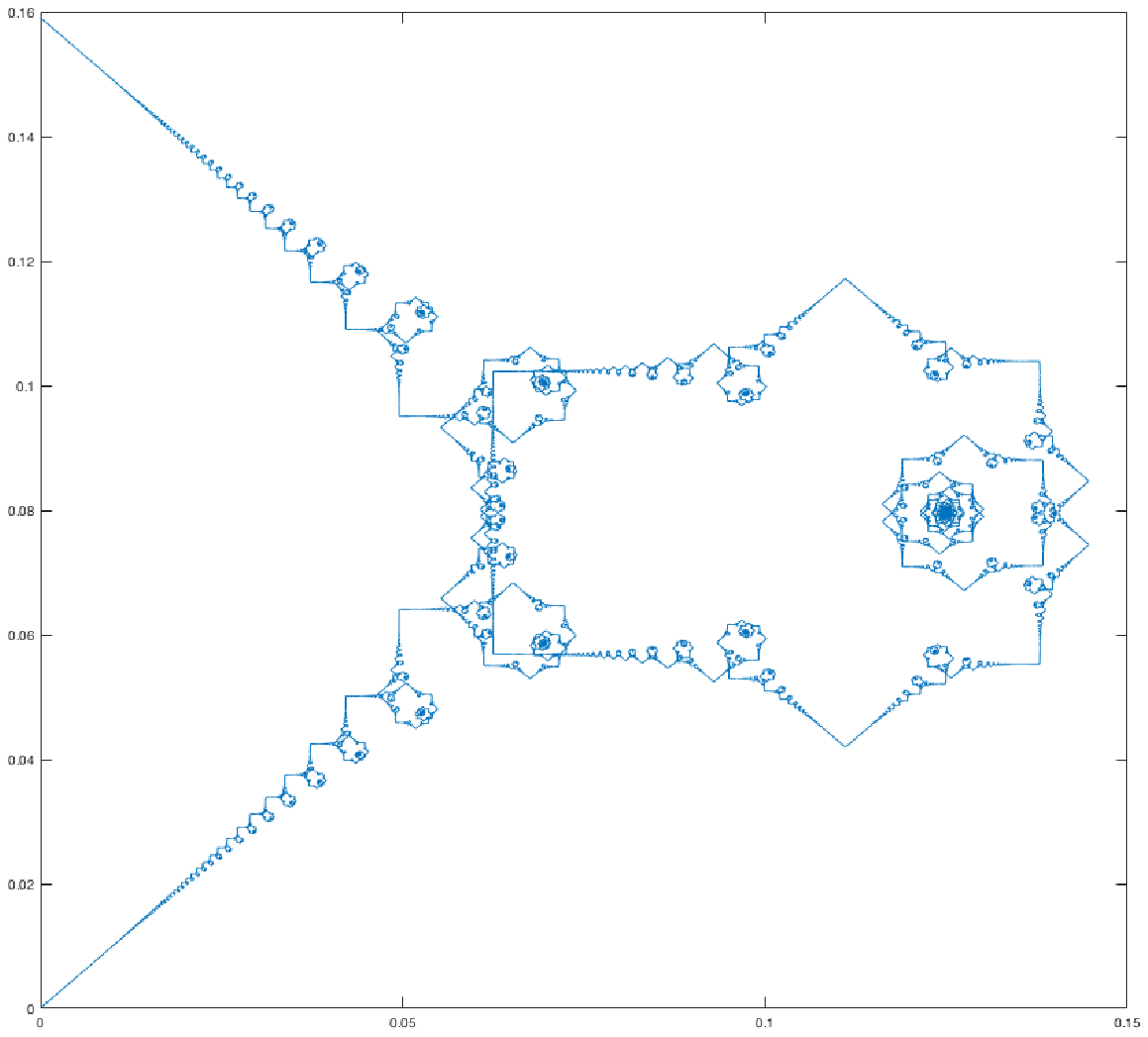}
  \caption{}
  \label{Figure_Plot_Trajectory}
\end{subfigure}%
\begin{subfigure}{0.03\textwidth}
  \centering
  \hspace{1pt}
\end{subfigure}%
\begin{subfigure}{0.45\textwidth}
  \centering
  \includegraphics[width=\linewidth]{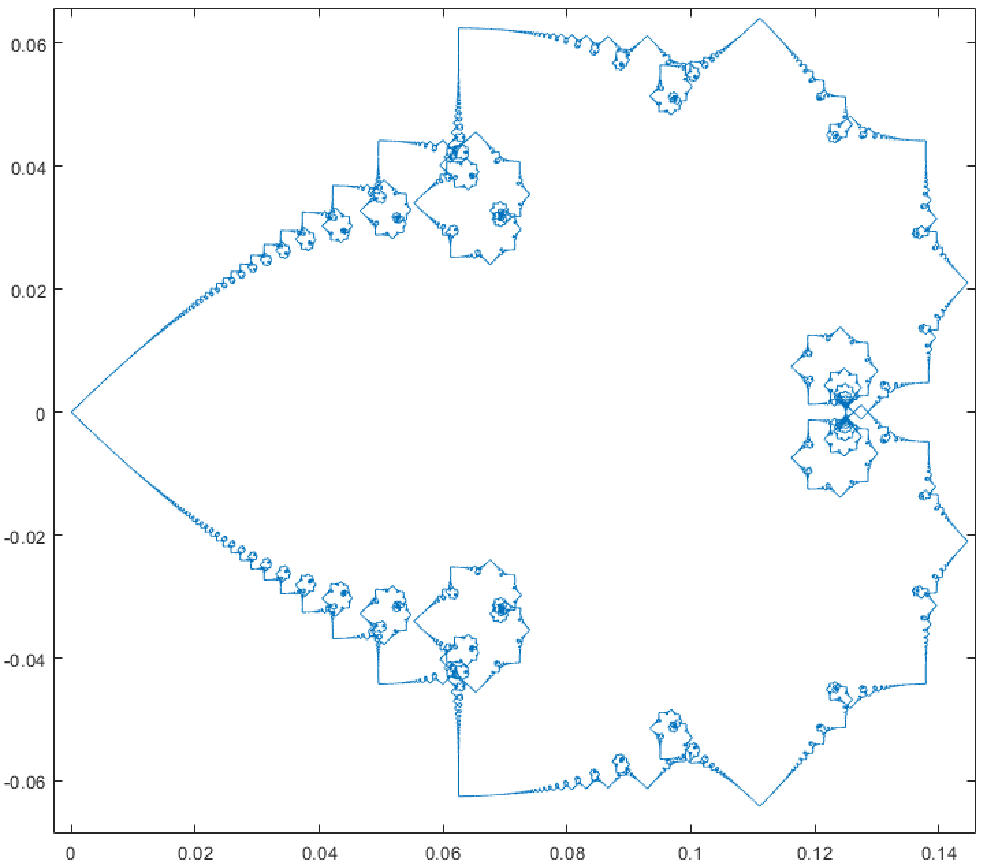}
  \caption{}
  \label{Figure_Plot}
\end{subfigure}
\end{center}
\label{Figure_RiemannPlots}
\caption{In Figure~\ref{Figure_Plot_Trajectory}, The image of $\phi$ defined in \eqref{RiemannFunction_Trajectory}. In Figure~\ref{Figure_Plot}, the image of $R$ defined in \eqref{RiemannFunction}.}
\end{figure}

\begin{figure}[h]
\centering
\begin{subfigure}{0.3\textwidth}
  \centering
  \includegraphics[width=1\linewidth]{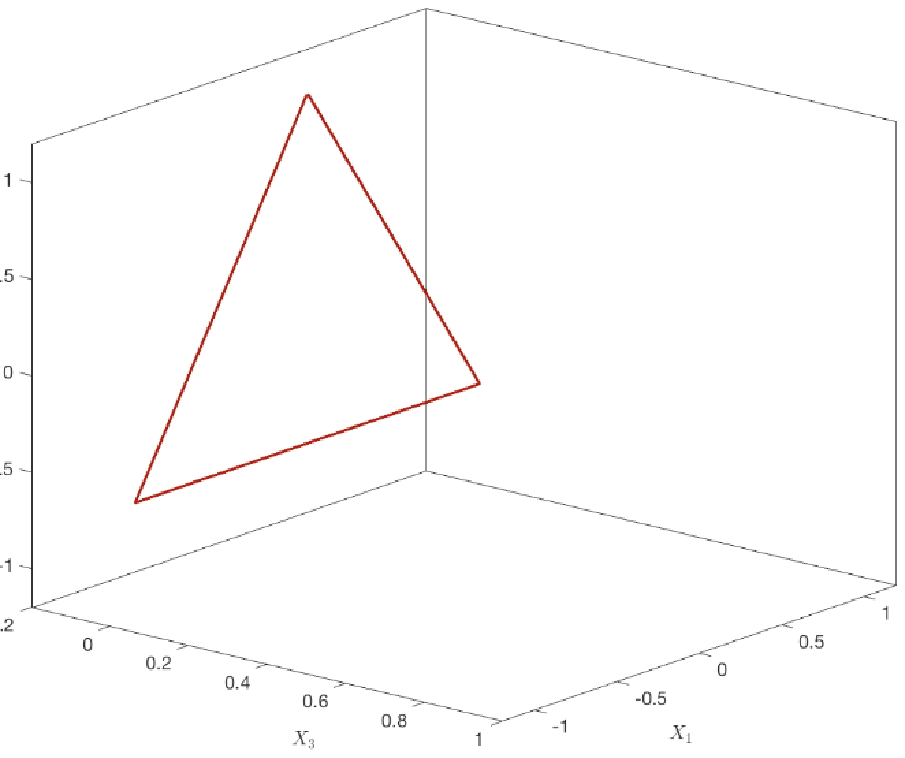}
  \caption{Fraction of the period : 0}
  \label{fig:Talbot0}
\end{subfigure}%
\begin{subfigure}{0.03\textwidth}
  \centering
  \hspace{1pt}
\end{subfigure}%
\begin{subfigure}{0.3\textwidth}
  \centering
  \includegraphics[width=1\linewidth]{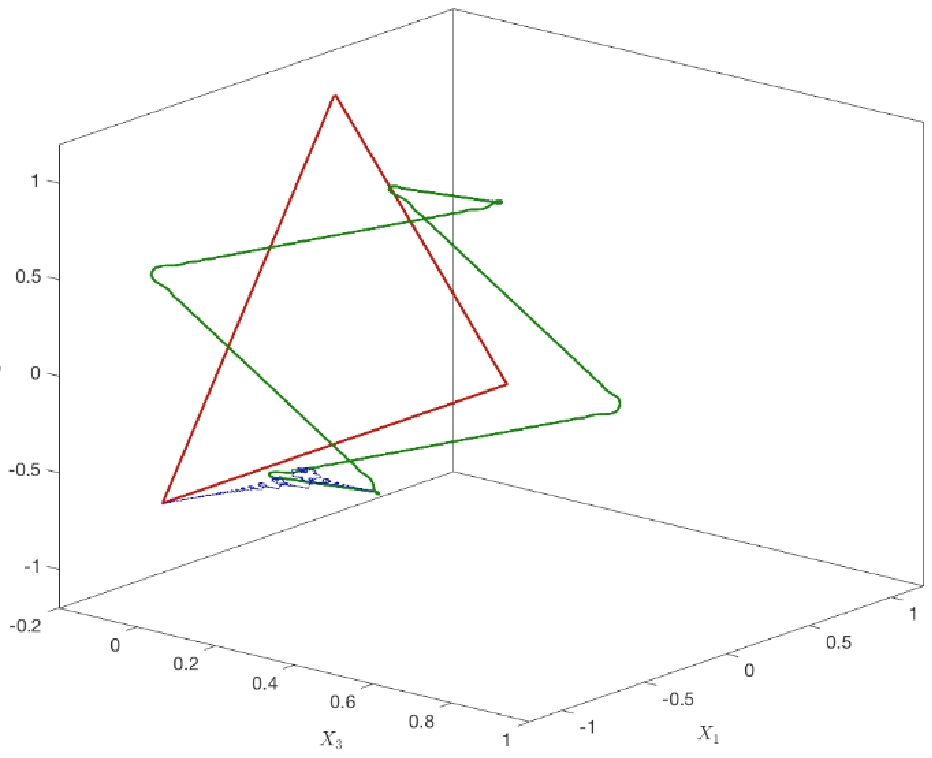}
  \caption{Fraction of the period : $\frac14$ }
  \label{fig:Talbot14}
\end{subfigure}
\begin{subfigure}{0.03\textwidth}
  \centering
  \hspace{1pt}
\end{subfigure}%
\begin{subfigure}{0.3\textwidth}
  \centering
  \includegraphics[width=1\linewidth]{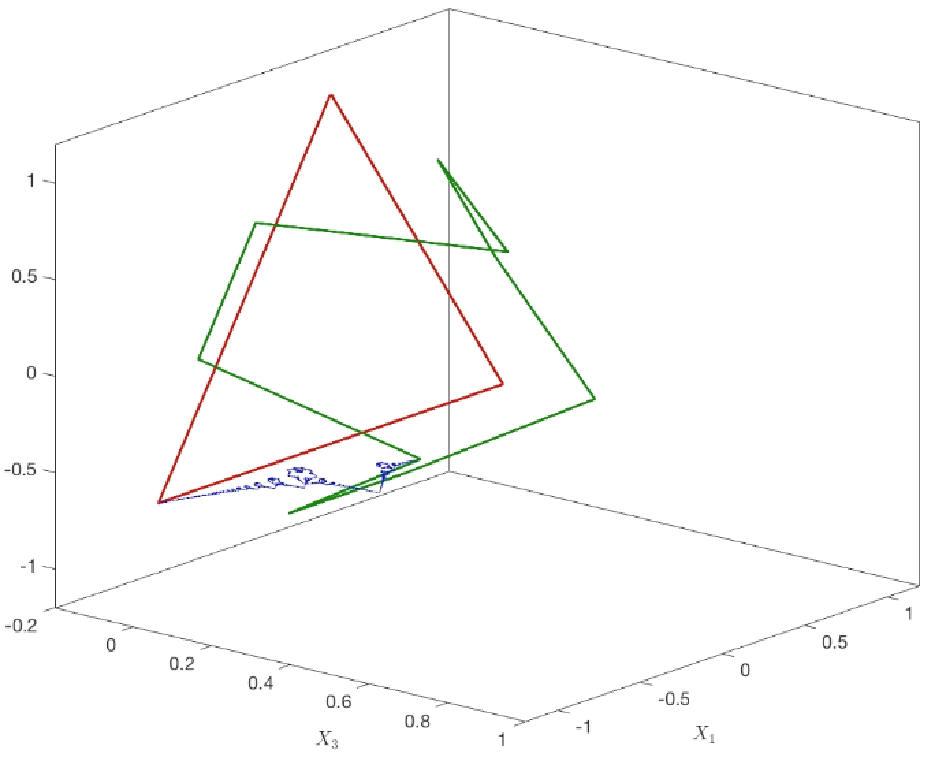}
  \caption{Fraction of the period : $\frac13$}
  \label{fig:Talbot13}
\end{subfigure}
\begin{subfigure}{0.3\textwidth}
  \centering
  \includegraphics[width=1\linewidth]{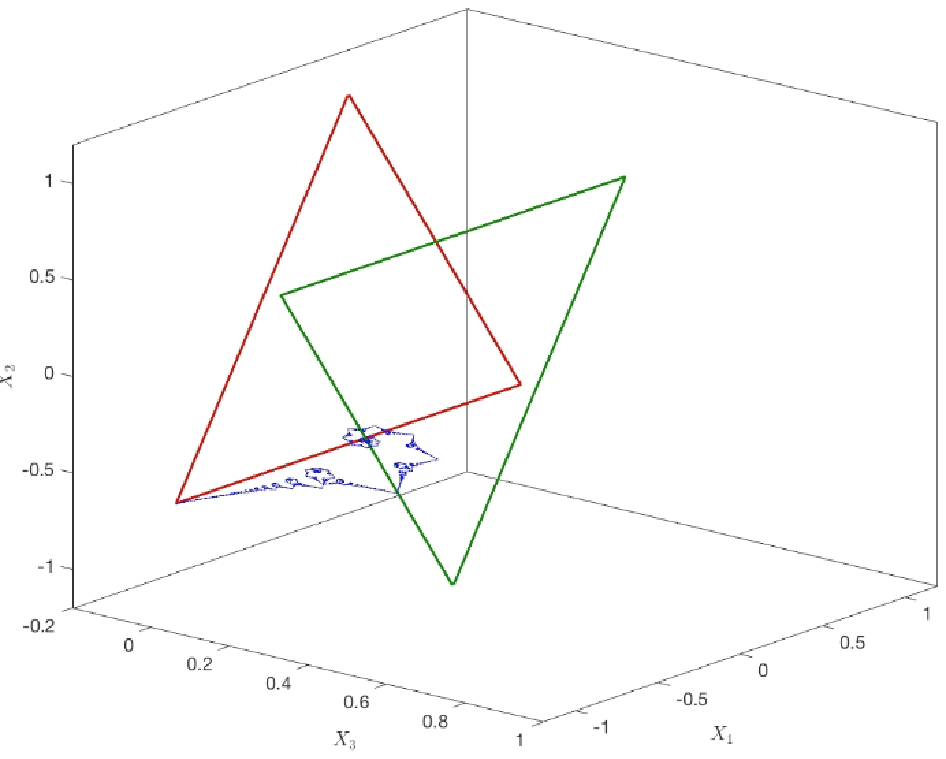}
  \caption{Fraction of the period : $\frac12$}
  \label{fig:Talbot12}
\end{subfigure}
\begin{subfigure}{0.03\textwidth}
  \centering
  \hspace{1pt}
\end{subfigure}%
\begin{subfigure}{0.3\textwidth}
  \centering
  \includegraphics[width=1\linewidth]{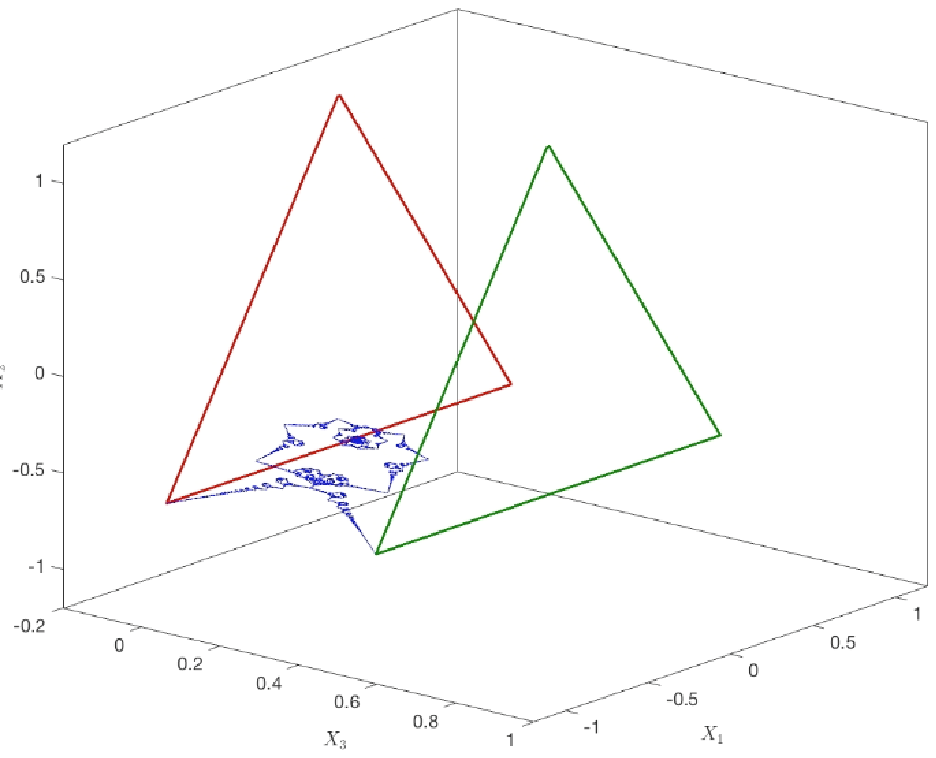}
  \caption{Fraction of the period : 1}
  \label{fig:Talbot1}
\end{subfigure}
\begin{subfigure}{0.03\textwidth}
  \centering
  \hspace{1pt}
\end{subfigure}%
%\begin{subfigure}{0.3\textwidth}
%  \centering
%  \includegraphics[width=1\linewidth]{TalbotMiddle_recortada}
%  \caption{Fracción intermedia}
%  \label{fig:TalbotMiddle}
%\end{subfigure}
\caption{
Snapshots of the numeric simulation in \url{https://youtu.be/leKT9deeZYA} by Sandeep Kumar \cite{KumarVideos}.
 They show the evolution of a triangular vortex filament, which is periodic, at fractions 0, 1/4, 1/3, 1/2 and 1 of the period. In blue, the trajectory of a corner. 
}
\label{Figure_Evolution}
\end{figure}

The two connections given above make it natural to study properties of $\phi$ that are physically motivated and related to turbulence. From the analytic point of view, it is usual to work with 
\begin{equation}\label{RiemannFunction}
R(t) = \sum_{n=1}^{\infty}{\frac{e^{2\pi i n^2 t}}{n^2}}, \quad t \in [0,1],
\end{equation}
which is the immediate adaptation of the original Riemann's function to the complex plane. Its image is shown in Figure~\ref{Figure_Plot}. Observe that $R$ and $\phi$ share analytic properties because their difference is smooth.

The objective of our work is to give quantitative estimates of the intermittency of $R$. In particular, we will identify a logarithmic correction that cannot be deduced from Jaffard's multifractal result \cite{Jaffard}. The main conclusion of this paper, stated here qualitatively, is the following:
\begin{thm}\label{TheoremIntroduction}
In the sense of the fourth-order flatness, Riemann's non-differentiable function defined in \eqref{RiemannFunction} is intermittent.
\end{thm}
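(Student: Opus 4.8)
The plan is to quantify intermittency through the fourth-order flatness
\[
  \mathcal{F}(\delta) = \frac{\int |R(t+\delta)-R(t)|^4 \, dt}{\left(\int |R(t+\delta)-R(t)|^2 \, dt\right)^2},
\]
and to show that $\mathcal{F}(\delta) \to \infty$ as $\delta \to 0^+$, in fact with a logarithmic rate. The strategy splits into two independent computations mirroring the abstract: a physical-space analysis of the structure functions $S_p(\delta) = \int_0^1 |R(t+\delta)-R(t)|^p\,dt$ for $p=2,4$, and a Fourier-space analysis using Littlewood--Paley-type high-pass filters $R_{>N} = \sum_{n>\sqrt N} n^{-2} e^{2\pi i n^2 t}$, comparing $\|R_{>N}\|_{L^4}^4$ with $\|R_{>N}\|_{L^2}^4$.

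First I would pin down the second-order structure function. This is the soft part: by Plancherel, $S_2(\delta) = \sum_{n\ge 1} n^{-4}\, |e^{2\pi i n^2\delta}-1|^2 = 4\sum_{n\ge 1} n^{-4}\sin^2(\pi n^2 \delta)$, and an elementary splitting of the sum at $n \sim \delta^{-1/2}$ (using $\sin^2 x \le x^2$ on the low modes and $\sin^2 x \le 1$ on the high modes) gives $S_2(\delta) \asymp \delta^{3/2}$, with no logarithm. Thus the denominator of $\mathcal{F}(\delta)$ is comparable to $\delta^3$. The whole point is then to show that the numerator $S_4(\delta)$ is \emph{larger} than $\delta^3$ by a logarithmic factor: $S_4(\delta) \asymp \delta^3 \log(1/\delta)$. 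The lower bound $S_4(\delta)\gtrsim \delta^3\log(1/\delta)$ is where the intermittency is hiding and will be the main obstacle; I expect it to come from the Gauss-sum / multifractal structure of $R$, exploiting that near rationals $p/q$ with small $q$ the increments of $R$ are anomalously large (the $|\xi|=1/3$ picture of $R'$ in Figure~\ref{Figure_Plot} is the visible trace of this), so that a small set of $t$ contributes disproportionately to the fourth moment. Concretely, I would expand $|R(t+\delta)-R(t)|^4$ and reduce $S_4(\delta)$ to a diagonal sum $\sum_{n} n^{-8}|e^{2\pi i n^2\delta}-1|^4$ plus off-diagonal exponential-sum terms; the diagonal already gives $\asymp \delta^{7/2}$, which is far too small, so the logarithmic growth must be extracted from the off-diagonal terms, estimated via Weyl/van der Corput bounds for the relevant quadratic Gauss sums and a careful accounting of near-resonances $n_1^2 \pm n_2^2 \pm n_3^2 \pm n_4^2$ of size $O(1/\delta)$.

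The Fourier-side argument I would run in parallel as a cross-check and because the combinatorics is cleaner there: with $N$ the cutoff, $\|R_{>N}\|_{L^2}^2 = \sum_{n>\sqrt N} n^{-4} \asymp N^{-3/2}$ by comparison with an integral, so the denominator is $\asymp N^{-3}$, while $\|R_{>N}\|_{L^4}^4 = \sum_{\substack{n_1^2+n_2^2 = n_3^2+n_4^2 \\ n_i > \sqrt N}} (n_1 n_2 n_3 n_4)^{-2}$ by orthogonality. The trivial diagonal ($\{n_1,n_2\}=\{n_3,n_4\}$) contributes $\asymp N^{-3}$, matching the denominator, so again the logarithm has to come from the nontrivial solutions of $n_1^2+n_2^2=n_3^2+n_4^2$; here one can use the classical parametrization of such quadruples (equivalently, the divisor-type bound $r_2$-convolution estimates) to show the off-diagonal sum over $n_i > \sqrt N$ is $\asymp N^{-3}\log N$. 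Matching $N \sim 1/\delta$ then shows both approaches give flatness $\asymp \log(1/\delta)$, proving Theorem~\ref{TheoremIntroduction}. The delicate points throughout will be getting matching upper \emph{and} lower bounds — the upper bound on $S_4$ requires controlling the full off-diagonal with no loss, and the lower bound requires producing enough genuine resonances — so I would isolate a clean number-theoretic lemma counting solutions of $|n_1^2+n_2^2-n_3^2-n_4^2| \le M$ with weights, and feed it into both the physical- and Fourier-space computations.
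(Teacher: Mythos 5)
Your plan identifies the correct exponents and, crucially, the correct source of the logarithm: the paper's key input is Zalcwasser's theorem, $\int_0^1\bigl|\sum_{m=1}^N e^{2\pi i m^2x}\bigr|^4\,dx\simeq N^2\log N$, which is exactly your resonance count $\#\{n_1^2+n_2^2=n_3^2+n_4^2,\ n_i\le N\}\simeq N^2\log N$ in $L^4$ clothing. On the Fourier side your computation ($\lVert R_{>N}\rVert_2^2\simeq N^{-3/2}$, and $\lVert R_{>N}\rVert_4^4$ as an all-positive weighted count over resonant quadruples) is essentially the paper's argument; the one device you leave implicit is how to handle the varying weights $(n_1n_2n_3n_4)^{-2}$ over the range $n_i>\sqrt N$. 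The paper does this with a Littlewood--Paley decomposition in base $A$ (with $A$ chosen so that the reverse triangle inequality survives): on each block the weights are essentially constant, so the block reduces to an unweighted Gauss sum handled by Zalcwasser, and the blocks are then summed with matching upper and lower bounds. Your "dyadic shells plus $r_2$-convolution estimates" would have to reprove the same thing, including a lower bound for the count restricted to a shell, which is where the reverse triangle inequality and the choice of $A$ do real work.

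The physical-space half of your plan has a genuine misdirection. Since $R$ is periodic with integer frequencies, integrating $|R(t+\delta)-R(t)|^4$ over the circle kills all non-resonant terms exactly; there are no near-resonances of size $O(1/\delta)$ and no role for Weyl/van der Corput bounds. What actually survives is the weighted sum over exact resonances with the complex weights $\prod_j(e^{2\pi i n_j^2\delta}-1)^{(\pm)}(n_j)^{-2}$, and the obstacle to a matching \emph{lower} bound is cancellation among these weights, not a counting problem. The paper's resolution is to split the increment at frequency $\ell^{-1}/2$: on the low-frequency piece $\sin(\pi k\ell)\simeq \pi k\ell>0$, so all weights become positive and comparable to $\ell^2\sigma_m\sigma_{k-m}$, reducing $\lVert I_{\le \ell^{-1}/2}\rVert_4^4$ to $\ell^4$ times the same Zalcwasser quantity and giving the sharp $\ell^3\log(\ell^{-1})$ from below and above; the high-frequency tail is then dominated by $\lVert R_{\ge \ell^{-1}/2}\rVert_4^4$, which the Fourier-side lemma already controls. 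Without this positivity step (or an equivalent one) your expansion of $S_4$ gives neither a clean lower bound nor an upper bound, so you should replace the "off-diagonal via exponential-sum bounds" step with the low/high frequency splitting before the argument can close.
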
 
We will introduce and motivate all concepts involved in this theorem in Section~\ref{SECTION_PhysicalBackground}, and we will also write the corresponding quantitative statement in Theorem~\ref{Theorem_Quantitative}. We will rigorously define all concepts in  Section~\ref{SECTION_Statement_Of_The_Result} and give the exact quantitative formulation in Theorem~\ref{Thm1}.

Let us briefly explain the structure of the document. Before precisely stating the results, in  Subsection~\ref{SUBSECTION_Multifractality}  we elaborate on the physical background that motivates this work, that is,  the relationship between turbulence and intermittency, the Frisch-Parisi multifractal formalism and Jaffard's work on Riemann's function, and motivate the description of intermittency through the flatness.
For the sake of completeness of the physical background, in Subsection~\ref{SUBSECTION_VFE} we explain with more detail how Riemann's function appears in the evolution of polygonal vortex filaments that follow the binormal flow.
Then, following the multifractal literature and the ideas of Frisch \cite[Chapter 8]{Frisch}, in Section~\ref{SECTION_Statement_Of_The_Result} we set notation, we adapt the definition of intermittency to functions by means of high-pass filters and structure functions, and we state the main Theorem~\ref{Thm1}. 
%We also discuss this result. 
In Sections~\ref{SECTION_HighPassFilters} and \ref{SECTION_StructureFunctions} we prove Theorem~\ref{Thm1}. 
Finally, in Appendixes~\ref{APPENDIX_LittlewoodPaley} and \ref{APPENDIX_Zalcwasser} we include some auxiliary results that we need in the proof.

\section{Physical Background}\label{SECTION_PhysicalBackground}

\subsection{Turbulence, intermittency, multifractality and Riemann's function}\label{SUBSECTION_Multifractality}

In a series of papers in 1941, Kolmogorov gave a precise statistical description of the properties of turbulent flows.
%when the Reynolds number is large enough. 
These articles, usually referred to as the K41 theory, are seminal for the theory of turbulence. 
The idea is that away from the boundaries and at scales neither too large nor too small
%where turbulence is observed 
(i.e., in the inertial range), the behaviour of a turbulent flow is universal: 
it does not depend on the fluid under consideration, nor on the geometry of the particular mechanisms producing it. The K41 theory assumes that the flows are 
homogeneous (statistically invariant under translations) and isotropic (invariant under rotations), and that the velocity increments are statistically self-similar.

\begin{rmk}\label{Remark_Dissipation}
One more assumption of K41 is the existence of a dissipation scale, below which the velocity is damped by the effect of viscosity. 
%under which turbulence stops happening due to the effect of viscosity. 
However, many results in this theory assume that the Reynolds number tends to infinity, which can be thought of the viscosity tending to zero. This implies that the dissipation scale goes to zero. In this situation, the inertial range covers scales that are arbitrarily small.
%However, most of the results of the theory are obtained in the limit of zero viscosity, which makes the dissipation scale go to zero. Thus, the inertial range stretches all the way down to zero. 
\end{rmk}

Although this theory gives remarkable results, such as the Kolmogorov four-fifths law \cite{Kolmo}, 
two main issues have been pointed out: the lack of universality, emphasized by Landau in 1944 and reported later \cite{Landau} (it was just a footnote in the first version of the book), and the lack of self-similarity of the velocity increments in the inertial range,  highlighted by several experiments. Indeed, the velocity of a fluid in fully developed turbulence may erratically change even over very small distances. This lack of self-similarity is called \textit{intermittency}.

Several ways to model intermittency can be found in the literature. One method, discussed by Frisch \cite[Sections 8.2 and 8.3]{Frisch}, is to measure the variability of the velocity in small scales by means of the increments
\begin{equation}\label{VelocityIncrements}
\delta v(r,l)  = v(r + l) - v(r), \qquad r,l \in \mathbb{R}^3
\end{equation}
from a statistical point of view. For that purpose, structure functions 
\begin{equation}\label{FrischStructureFunction}
 S_p(\ell)=\left< \left|\delta v(r,l)\right|^p \right>
\end{equation}
are used, which are very relevant in the literature of turbulence (see \cite{Frisch, Jaffard} and also \cite{Boritchev2013, Boritchev2014, Boritchev2016} for rigorous results for the Burgers equation).
Here, $p \geq 1$ and $\ell = |l| >0$, and $\langle \cdot \rangle$ denotes an ensemble average, defined as the mean over many realizations of the flow with different initial conditions and forcing. 
It could as well be a temporal average if we assume the ergodicity of the flow, or a spatial one if we use its homogeneity. In the study of turbulence, these three definitions are usually considered to be equivalent.
In any case, $S_p(\ell)$ does not depend on $r$ by homogeneity, and depends only on $\ell = |l|$ by isotropy. 

Structure functions can be combined with the probabilistic concept of kurtosis or flatness, which measures the \textit{tailedness}, that is, the thickness of the tails of a probability density function. Shortly, the larger the flatness, the higher the probability of getting outlier values. Since one way to account for a high variability of the velocity is to see that it often takes values far from the mean, it is reasonable to measure it in terms of the flatness
\begin{equation}\label{FrischFlatStructure}
 G(\ell)=\frac{S_4(\ell)}{S_2(\ell)^2}=\frac{\left< |\delta v(\ell)|^4 \right>}{\left< |\delta v(\ell)|^2 \right>^2}.
\end{equation}
Following Remark~\ref{Remark_Dissipation}, we will say that the flow is intermittent if $G(\ell)$ tends to infinity when the scale parameter $\ell$ tends to zero. In terms of self-similarity, the key point is that $G(\ell)$ is constant for flows with self-similar velocity increments.
Indeed, if the increments $\delta v(r,\ell)$ are self-similar with scaling exponent $h$,  then $\delta v(r,\lambda\ell) \stackrel{\text{Prob.}}{=} \lambda^h\delta v(r,\ell)$ for any $r$ and for any $\lambda>0$, in the sense that both $\delta v(r,\lambda\ell)$ and $\lambda^h\delta v(r,\ell)$ have the same probability distribution.
This implies that $G(\lambda\ell)=G(\ell)$ for all $ \lambda>0$.  
Since, by the definition above, a flow that is intermittent cannot have a constant flatness, it cannot have self-similar increments. This matches the heuristic definition of intermittency as the lack of self-similarity given in the beginning of this section. 

An alternative idea, also discussed in \cite{Frisch}, is to work with high-pass filters of the velocity in the Fourier space. This consists in removing the lower Fourier modes, so that only the oscillations with largest frequencies remain. If the velocity of the flow $v$ can be expressed in terms of its Fourier transform
\begin{equation*}
 v(r)=\int_{\R^3}\hat v(\omega)\,e^{i\omega \cdot r}\, \mathrm d\omega, 
\end{equation*}
then the high-pass filtered velocity $v_{>\Omega}$ is defined by 
\begin{equation*}
 v_{>\Omega}(r)=\int_{|\omega|>\Omega}\hat v(\omega) \, e^{i\omega \cdot r}\, \mathrm d\omega, \qquad \quad \text{ for } \Omega >0.
\end{equation*}
Since the highest frequencies measure the small-scale behaviour, the same idea of adapting the kurtosis to the statistical distribution of the velocity suggests defining the flatness alternatively by
\begin{equation}\label{FrischFlatFilters}
 F(\Omega)=\frac{\left\langle \left|v_{>\Omega}(r)\right|^4 \right>}{\left< \left|v_{>\Omega}(r)\right|^2 \right>^2},
\end{equation}
which does not depend on $r$ because of homogeneity. We say that the flow is intermittent
if $F(\Omega)$ tends to infinity when $\Omega$ grows.
As for $G(\ell)$, $F(\Omega)$ is constant for self-similar flows.
In this situation, there is an exponent $h >0$ such that for all $r$ and all $\lambda >0$ we have $v_{>\lambda \Omega}(r) \stackrel{\text{Prob.}}{=} \lambda^{-h}\, v_{>\Omega}(r)$, that is, $v_{>\lambda \Omega}(r)$ and $\lambda^{-h}\, v_{>\Omega}(r)$ have the same probability distribution. As a consequence, $F(\lambda\Omega)=F(\Omega)$ for every $\lambda>0$. 
Therefore, the more $F(\Omega)$ grows with $\Omega$, the less $v$ is self-similar at small scales, 
and thus by definition the more intermittent the flow $v$ is.

\begin{rmk}\label{rmkHPfilters}
Frisch \cite[Section 8.2]{Frisch} defines the high-pass filters of the velocity $v$ with respect to time instead of space. In this case, he suggests that  the inverse of the flatness measures the fraction of the time when the studied signal is ``on''. 
This remark leads to a more intuitive definition of intermittency: 
a signal is said to be intermittent if it displays activity during only a fraction of time that decreases with the scale of time under consideration.
\end{rmk}

Very related to the above is the Frisch-Parisi multifractal formalism. 
If the velocity $v$ is highly variable even over short distances, then one may expect that even for small variations of $r$, the scaling for the velocity increments
\begin{equation}\label{DifferentScalings}
|\delta v(r,l) | \approx |l|^\alpha, \qquad r,l \in \mathbb{R}^3,
\end{equation}
will hold for very different values of $\alpha$. It is generally accepted that the relevant quantity is the dimension $d(\alpha)$ of the set of points where \eqref{DifferentScalings} holds for a fixed $\alpha$, the so-called spectrum of singularities of the velocity. It is expected that many non trivial values will exist for different $\alpha$. However, since it is very difficult to measure that directly in experiments, structure functions \eqref{FrischStructureFunction} are used alternatively. The contribution of Frisch and Parisi \cite{FrischParisi} was that under the assumption that the structure functions \eqref{FrischStructureFunction} satisfy $S_p(\ell) \approx \ell^{\zeta(p)}$ when $\ell$ is small, we can expect that
\begin{equation}\label{FrischParisiConjecture}
d(\alpha) = \inf_{p} \left( \alpha p - \zeta(p) + 3 \right).
\end{equation}
This formula is known as the Frisch-Parisi conjecture or the multifractal formalism. 

But why would Riemann's non-differentiable function be related to any of these concepts? Let us begin by saying that the argument of Frisch and Parisi leading to their multifractal formalism \eqref{FrischParisiConjecture} is completely heuristic, and hence, its mathematical validity is doubtful. However, the formalism can be rigorously adapted to the setting of measures and functions. Let us focus on the case of functions $f$ of a real variable. For $\alpha>0$, $f$ is said to be locally $\alpha$-H\"older regular at a point $x_0$, and denoted $f \in C^{\alpha}(x_0) $,
if there exists a polynomial $P_{x_0}$ of degree at most $\lfloor \alpha \rfloor$ such that  $\left| f( x_0 + h) - P_{x_0}(h) \right| 
\leq C |h|^{\alpha}$ for small enough $h\in\mathbb{R}$. Let the H\"older exponent of $f$ at $x_0$ be 
$$
H_f(x_0)=\sup\ \left\{\alpha:\ f \in C^{\alpha}(x_0) \right\}.
$$
Then, the spectrum of singularities is defined for each $\alpha >0$ as the Hausdorff dimension of the set of points $x_0$ having H\"older exponent $\alpha$, 
\[ d_f(\alpha) = \dim_{\mathcal{H}}\{ x_0 : H_f(x_0) = \alpha \},  \]
where by convention the dimension is $-\infty$ in case the set is empty.  Besides, it is natural to adapt the definition of structure functions in \eqref{FrischStructureFunction} as
 \begin{equation}\label{StructureFunctionsJaffard}
S_{f,p}(\ell) = \int{|f(x+\ell)-f(x)|^p\,\mathrm dx}.
\end{equation}
Then, if $\zeta_f(p)$ is the exponent that best fits the behaviour $S_{f,p}(\ell) \approx \ell^{\zeta_f(p)}$, in the sense that $\zeta_f(p) = \liminf_{\ell \to 0} \log S_{f,p}(\ell) / \log \ell$, then the multifractal formalism asserts that 
\begin{equation}\label{FrischParisiConjectureFunctions}
d_f(\alpha) = \inf_{p} \left( \alpha p - \zeta_f(p) + 1 \right).
\end{equation}
The change from the 3 in \eqref{FrischParisiConjecture} to 1 is due to the velocity $v$ being three dimensional, while $f$ here is a function in $\mathbb{R}$. If working with functions in $\mathbb R^d$, the 3 in \eqref{FrischParisiConjecture} should be replaced by $d$.

There is, however, no reason by which \eqref{FrischParisiConjectureFunctions} should be true for all $f$, and it became an interesting mathematical problem to decide for which functions it does hold.
Some partial results were given concerning functions in Sobolev spaces \cite{Jaffard1992}, and extended to Besov spaces \cite{Eyink}.
Also, the formalism was checked for several \textit{toy} examples \cite{DaubechiesLagarias} and for generic and self-similar functions \cite{JaffardMultifractal,JaffardMultifractal2}. Multifractality of some classical functions  was studied as well \cite{JaffardOldFriends}. 
In such works, precise definitions of the exponent $\zeta(p)$ were proposed using several Sobolev-type spaces and also wavelet techniques and wavelet leaders (see  \cite[Sections 2 and 3]{Jaffard2004}). Recently, the genericity of the multifractal formalism was shown in $\mathbb R^d$ in the sense that every concave, continuous and compactly supported function with maximum equal to $d$ is the spectrum of singularities of a whole category of functions that satisfy the multifractal formalism \cite{BarralSeuret2020}. 

But most importantly for us, Jaffard \cite{Jaffard} proved that Riemann's function $R$ satisfies the multifractal formalism \eqref{FrischParisiConjectureFunctions} by computing the exponents
\begin{equation}\label{ExponentJaffard}
    \zeta(p) = \left\{ \begin{array}{cc}
        3p/4 & \text{ when } p \leq 4,  \\
        1 + p/2 & \text{ when } p > 4,
    \end{array}
    \right.
\end{equation}
with the definition $\zeta(p) = \sup\{ s \mid f \in B^{s/p,\infty}_p  \}$ where $B^{s/p,\infty}_p $ are Besov spaces, and also 
\begin{equation*}
d(\alpha) = \left\{ \begin{array}{ll}
4\alpha - 2, & \alpha \in [1/2,3/4], \\
0, & \alpha = 3/2, \\
-\infty, & \text{otherwise,}
\end{array} \right.
\end{equation*}
thus extending the known results on its analytic regularity.
Therefore, a relationship between this traditionally analytic object and turbulence was established.

This result by Jaffard motivates the study of intermittency from the point of view of the flatness as discussed earlier. Indeed, even if the functional approach is enough to compute $\zeta(p)$, there is still some ambiguity in the behavior of the structure functions. If $S_p(\ell)$ follows a power law, then it should be $\ell^{\zeta(p)}$, but one cannot rule out corrections of lower order like logarithms. These corrections are critical for the flatness and for the definition we have for intermittency, since the power laws alone would imply that  $G(\ell) = S_4(\ell)/S_2(\ell)^2 \approx \ell^3 / (\ell^{3/2})^2 = 1$, while a logarithmic correction in $S_4(\ell)$ could make $G(\ell)$ grow logarithmically. In this paper, we determine the precise behavior of $S_4(\ell)$ and $S_2(\ell)$.
\begin{thm}\label{Theorem_Quantitative}
For Riemann's non-differentiable function defined in \eqref{RiemannFunction},
\begin{equation}
    S_4(\ell) \simeq \ell^3 \log(\ell^{-1}), \qquad \qquad S_2(\ell) \simeq \ell^{3/2}, \qquad \text{ for } \ell \ll 1,
\end{equation}
so $G(\ell) \simeq \log(\ell^{-1})$. Analogue results hold for high-pass filters. Thus, Riemann's non-differentiable function is intermittent. 
\end{thm}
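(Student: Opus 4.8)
I would work throughout with the Fourier series of the increment $g_\ell(t):=R(t+\ell)-R(t)=\sum_{n\ge1}a_n(\ell)\,e^{2\pi i n^2 t}$, where $a_n(\ell)=n^{-2}(e^{2\pi i n^2\ell}-1)=2\pi i\ell\,e^{i\pi n^2\ell}\,\mathrm{sinc}(n^2\ell)$ with $\mathrm{sinc}(x):=\sin(\pi x)/(\pi x)$, so that $|a_n(\ell)|=2\pi\ell\,|\mathrm{sinc}(n^2\ell)|\le\min(2\pi\ell,\,2n^{-2})$; the transition between the two bounds occurs at the scale $N:=\ell^{-1/2}$. For $S_2$ this is already enough: Plancherel gives $S_2(\ell)=\sum_n|a_n(\ell)|^2=4\sum_n n^{-4}\sin^2(\pi n^2\ell)$, and splitting the sum at $n\simeq N$, using $\sin^2(\pi n^2\ell)\simeq n^4\ell^2$ for $n\lesssim N$ and $\sin^2\le1$ for $n\gtrsim N$, produces the two contributions $\ell^2\cdot N$ and $N^{-3}$, both $\simeq\ell^{3/2}$; hence $S_2(\ell)\simeq\ell^{3/2}$, and likewise $\|R_{>\Omega}\|_{L^2}^2=\sum_{n^2>\Omega}n^{-4}\simeq\Omega^{-3/2}$. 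The remaining and substantial task is $S_4$.

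Writing $g_\ell=2\pi i\ell\,F_\ell(\cdot+\ell/2)$ with $F_\ell(t):=\sum_{n\ge1}\mathrm{sinc}(n^2\ell)\,e^{2\pi i n^2 t}$, one has $S_4(\ell)=16\pi^4\ell^4\,\|F_\ell\|_{L^4([0,1])}^4$, so everything reduces to proving $\|F_\ell\|_{L^4}^4\simeq N^2\log N\simeq\ell^{-1}\log(\ell^{-1})$; this gives $S_4(\ell)\simeq\ell^3\log(\ell^{-1})$ and then $G(\ell)=S_4(\ell)/S_2(\ell)^2\simeq\log(\ell^{-1})\to\infty$. Expanding, $\|F_\ell\|_{L^4}^4=\sum_{n_1^2+n_2^2=n_3^2+n_4^2}\mathrm{sinc}(n_1^2\ell)\mathrm{sinc}(n_2^2\ell)\mathrm{sinc}(n_3^2\ell)\mathrm{sinc}(n_4^2\ell)$, whose $\ell\to0$ model is the classical count $\#\{n_i\le N:\,n_1^2+n_2^2=n_3^2+n_4^2\}=\sum_{m\le2N^2}r_2(m)^2$ with $r_2(m):=\#\{(a,b)\in\Z^2:a^2+b^2=m\}$; the arithmetic input I would use is the sharp second-moment estimate $\sum_{m\le M}r_2(m)^2\simeq M\log M$ (the upper bound here is far stronger than what the divisor bound $d(m)\ll m^\eps$ delivers).

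For the \emph{upper bound} on $\|F_\ell\|_{L^4}^4$ I would bound $|\mathrm{sinc}(n^2\ell)|\le\min(1,(\pi n^2\ell)^{-1})$, pass to absolute values, and split each $n_i$ into "low" $(\le N)$ or "high" $(>N)$: the all-low regime is $\lesssim\sum_{m\le2N^2}r_2(m)^2\simeq N^2\log N$, and in every regime containing a large variable the equation $n_1^2+n_2^2=n_3^2+n_4^2$ forces the two largest $n_i$ to be comparable, so summing dyadically in $\max_i n_i$ and applying the second-moment estimate on each dyadic block (here it is essential to have $\sum_{m\le M}r_2(m)^2\simeq M\log M$ at every scale) again gives $\lesssim N^2\log N$; the purely diagonal part $\{n_1,n_2\}=\{n_3,n_4\}$ contributes only $\simeq(\sum_n|a_n(\ell)|^2)^2\ell^{-4}\simeq N^2$, with no logarithm. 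The same scheme, now with a genuine Littlewood--Paley decomposition (Appendix~\ref{APPENDIX_LittlewoodPaley}) since the coefficients $n^{-2}$ of $R_{>\Omega}$ carry honest power-law decay and no transition scale, yields $\|R_{>\Omega}\|_{L^4}^4\lesssim\Omega^{-3}\log\Omega$.

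The \emph{lower bound} is, I expect, the main obstacle, and I would obtain it by a Farey/major-arc argument. Grouping the sum defining $F_\ell$ by residues of $n$ modulo $q$ shows that near a reduced fraction $t=p/q$ the inner sums carry no cancellation as long as $|s|\lesssim\ell$ and $q$ lies in a suitable range $1\le q\lesssim N$, whence $|F_\ell(p/q+s)|\simeq(N/q)\,|G(p,q)|$ with $G(p,q)=\sum_{a\bmod q}e^{2\pi i a^2 p/q}$ a quadratic Gauss sum; restricting to the $q$ for which $|G(p,q)|\simeq\sqrt q$ (thereby avoiding the vanishing of $G$ when $q\equiv2\bmod4$) this gives $|F_\ell(p/q+s)|\simeq Nq^{-1/2}$ on an interval of length $c\ell$ about $p/q$, and for $c$ small and $q\lesssim N$ these intervals are pairwise disjoint since distinct such fractions are $\gtrsim N^{-2}=\ell$ apart. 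Summing, $\|F_\ell\|_{L^4}^4\gtrsim\sum_{q\lesssim N}\sum_{\gcd(p,q)=1}\ell\,(Nq^{-1/2})^4\simeq\ell N^4\sum_{q\lesssim N}\phi(q)q^{-2}\simeq\ell N^4\log N=N^2\log N$, using $\sum_{q\le N}\phi(q)q^{-2}\simeq\log N$; an entirely analogous resonance argument on the dyadic block $n\simeq\Omega^{1/2}$ gives $\|R_{>\Omega}\|_{L^4}^4\gtrsim\Omega^{-3}\log\Omega$, hence $F(\Omega)=\|R_{>\Omega}\|_{L^4}^4/\|R_{>\Omega}\|_{L^2}^4\simeq\log\Omega\to\infty$. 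The delicate point throughout is making the local expansion of $F_\ell$ (equivalently of $g_\ell=2\pi i\ell\,F_\ell(\cdot+\ell/2)$) near rationals rigorous and uniform in $q$ — controlling the error terms in the residue-class splitting and, for the upper bound, the minor-arc contribution — which is exactly where the Gauss-sum and quadratic Weyl-sum estimates collected in Appendix~\ref{APPENDIX_Zalcwasser} enter. Putting the pieces together gives $S_4(\ell)\simeq\ell^3\log(\ell^{-1})$, $S_2(\ell)\simeq\ell^{3/2}$, $G(\ell)\simeq\log(\ell^{-1})$ and $F(\Omega)\simeq\log\Omega$, so $R$ is intermittent in the sense of the fourth-order flatness.
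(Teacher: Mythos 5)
Your proposal is correct in strategy and reaches the same estimates, but it takes a genuinely different route at the crucial fourth-moment step. The paper never performs a resonance analysis: for $S_4$ it splits the increment $I(\ell)$ at frequency $\ell^{-1}/2$, observes that on the low-frequency part all the factors $\sin(\pi m\ell)$ are comparable to $\pi m\ell$ and that the Fourier coefficients of $I_{\leq \ell^{-1}/2}(\ell)^2$ all carry the same sign, so that two applications of Parseval reduce $\lVert I_{\leq \ell^{-1}/2}(\ell)\rVert_4^4$ \emph{exactly} (up to constants, in both directions) to $\ell^4\lVert\sum_{k\leq\sqrt{\ell^{-1}/2}}e^{2\pi i k^2 x}\rVert_4^4$, and then quotes Zalcwasser's theorem $\lVert\sum_{m\leq N}e^{2\pi i m^2x}\rVert_4^4\simeq N^2\log N$ as a black box; the high-frequency tail is absorbed into the high-pass-filter estimate, which is itself handled by a Littlewood--Paley decomposition in which each block is compared to a complete Weyl sum via the triangle and reverse triangle inequalities (with the Littlewood--Paley ratio $A$ chosen so that the reverse triangle inequality does not degenerate). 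Your upper bound --- weighted count of $n_1^2+n_2^2=n_3^2+n_4^2$ with the cutoff $\min(1,(\pi n^2\ell)^{-1})$, dyadic in the largest variable, using $\sum_{m\leq M}r_2(m)^2\simeq M\log M$ at every scale --- is arithmetically the same input as Zalcwasser's theorem packaged differently, and it works. Your lower bound, however, re-proves the hard half of Zalcwasser's theorem from scratch by a Farey/major-arc argument ($|F_\ell|\simeq Nq^{-1/2}$ on disjoint intervals of length $\simeq\ell$ around fractions $p/q$ with $q\not\equiv 2\bmod 4$, then $\ell N^4\sum_{q\lesssim N}\phi(q)q^{-2}\simeq N^2\log N$). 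The skeleton is right, but this is precisely the part you leave unexecuted: you must control the minor contribution of the sign-changing tail $n>\ell^{-1/2}$ of $\mathrm{sinc}(n^2\ell)$, the error term $O(\sqrt{q}\log q)$ in the major-arc approximation uniformly in $q$ (which in practice forces a restriction such as $q\leq N/\log N$, harmless for the $\log$), and the analogous statement for the truncated blocks of $R_{\geq N}$. Also note that Appendix~\ref{APPENDIX_Zalcwasser} contains only the $L^p$ estimate \eqref{ZalcwasserEstimate}, not the Gauss-sum and Weyl-sum machinery you would need to import. In short: what the paper buys with its positivity/comparison trick is that the only number-theoretic input is a single citable 1936 theorem; what your route buys is a self-contained and more transparent explanation of \emph{where} the logarithm comes from (the $q^{-1/2}$ Gauss-sum saving summed over the Farey fractions), at the cost of substantial additional analysis that your sketch acknowledges but does not supply.
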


This logarithmic correction suggests that $p=4$ plays a special role for Riemann's function. Observe that it also coincides with the change of behavior for $\zeta(p)$ in \eqref{ExponentJaffard}.

In Section~\ref{SECTION_Statement_Of_The_Result} we will give precise definitions for all concepts discussed here, and the rigorous and precise statement of Theorem~\ref{Theorem_Quantitative} can be found in Theorem~\ref{Thm1}, both for structure functions and high-pass filters.

\subsection{The binormal flow, the vortex filament equation and Riemann's function}\label{SUBSECTION_VFE}
 
As we mentioned in the introduction, there is an astonishing connection of Riemann's non-differentiable function and the evolution of vortex filaments following the binormal flow \cite{HozVega}. This flow, governed by the vortex filament equation
\begin{equation}\label{VFE}
\boldsymbol{X}_t=\boldsymbol{X}_x\wedge \boldsymbol{X}_{xx}, \qquad (t,x)\in \R\times\R,
%\tag{VFE}
\end{equation}
is a model for one-vortex filament dynamics. In this equation, $\boldsymbol{X} = \boldsymbol{X}(t,x)\in\R^3$ is a curve parametrized by arclength $x$ and time $t$ and $\wedge$ is the usual cross product. It is easy to see that \eqref{VFE} can equivalently be written as $\boldsymbol{X}_t = \kappa\, \boldsymbol{B}$, where $\kappa$ is the curvature and $\boldsymbol{B}$ is the binormal vector, hence its name. 
 
A remarkable result about this equation was given by Hasimoto \cite{Hasimoto}, who proved that the transformation
\begin{equation}\label{HasimotoTransformation}
\Psi(t,x) = \kappa(t,x)\, e^{i\, \int_0^x{\tau(t,\sigma)\,\mathrm d\sigma}},
\end{equation}  
$\tau$ being the torsion, solves the nonlinear Schr\"odinger equation
\begin{equation}\label{NLS}
i\Psi_t+\Psi_{xx}+\frac12\left(|\Psi|^2+A(t)\right)\Psi=0,
%\tag{NLS}
\end{equation}
where $A$ is a real, time dependent function that depends on $\kappa, \tau$ and their derivatives \cite{BanicaVega2012}. 
This way, the Hasimoto transformation supplies a method to find solutions of \eqref{VFE},
since they can be produced from particular solutions to \eqref{NLS} \cite{BanicaVega2012}.

Data with corners of different shapes have been considered in \cite{GutierrezRivasVega,BanicaVega2012,BanicaVega2015,BanicaVega2018}, which serve as a model for  filaments of air in a delta wing during a flight \cite{HozGarciaCerveraVega}, as well as the corners that are created after the reconnection of two different filaments in the rear of a plane, or even in the study of superfluid helium \cite{Schwarz}. These data are important also from an analytic point of view, since they correspond to the self-similar solutions to the equation.

We are interested in similar situations where, for a given $M \in \mathbb{N}$, the initial datum is a closed, regular and planar $M$-sided polygon. This can be seen as a superposition of the corners coming from the self-similar solutions \cite{HozVega2018}, and it is a model for experiments with  smoke rings produced from polygonal-like nozzles \cite{ChicagoPeople}. 
By planar we mean that $\tau(0,x) = 0$, so the initial datum  in \eqref{NLS} is $\Psi(0,x) = \kappa(0,x)$. An option to parametrise the initial curvature is to place $M$ equidistributed Dirac deltas in the interval $[0,2\pi)$ and then to extend this interval periodically to the real line, so that we have 
\begin{equation}\label{InitialDatum}
\Psi_M(0,x) = \frac{2\pi}{M}\,\sum_{k \in \mathbb{Z}}\delta\left(x - 2\pi \frac kM\right), \qquad x \in \mathbb{R}. 
\end{equation} 
Then, the Galilean invariance of \eqref{NLS} can be used to determine
\begin{equation}\label{GalileanSolution}
\Psi_M(t,x) = \widehat{\Psi_M}(t,0)\, \sum_{k \in \mathbb{Z}}e^{ - i(Mk)^2t + iMkx},
\end{equation}
and a very nice connection with the optical Talbot effect \cite{BerryMarzoliSchleich,Talbot,Rayleigh} is brought to light by showing that at every scaled rational time $t_{p,q} = (2\pi/M^2)(p/q)$ the curve $\boldsymbol{X}_M(t_{p,q},x)$ is again a polygon, which now is not necessarily planar but has $Mq$ sides if $q$ is odd, and $Mq/2$ sides if $q$ is even (see Figure~\ref{Figure_Evolution} or \cite[Section 5.3]{JerrardSmets} for some numerical simulations).

Also in \cite{HozVega}, the temporal trajectories of the corners of the initial filament, represented by $\boldsymbol{X}_M(t,2\pi k/M)$ for every fixed $k \in \mathbb{Z}$, are considered. 
Due to the translation invariance of \eqref{NLS} and the periodicity of the initial datum, all such trajectories are the same. It was shown numerically that, up to scaling, $\boldsymbol{X}_M(t,0)$ is extremely similar to the image of the function 
\begin{equation}\label{RiemannPhi}
\phi(t) = \sum_{k \in \mathbb{Z}}{ \frac{e^{2\pi i k^2 t} - 1}{ k^2} } = 2\pi i t + 2\, \sum_{k = 1}^\infty{ \frac{e^{2\pi i k^2 t} - 1}{ k^2} } + \frac{\pi^2}{3}, 
\end{equation}
which is essentially \eqref{RiemannFunction_Trajectory}. If $\widehat{\Psi_M}(t,0) = 1$, then $\phi$ is the integral of $\Psi_M(\cdot, 0)$. Heuristically, this integration can be seen as the analogue of unmaking Hasimoto's transformation \eqref{HasimotoTransformation}.  
Strictly speaking, a constant $\widehat{\Psi_M}(t,0)$ corresponds to the free Schr\"odinger equation but, also heuristically, it works for \eqref{NLS} by adjusting the function $A(t)$.

Moreover, the similarity between $\boldsymbol{X}_M(t,0)$ and Riemann's function improves when $M$ increases and it is indistinguishable to the eye with $M=10$ (see \cite[Figure 3]{HozVega}). This has recently been proved partially in \cite{BanicaVega2020}, further motivating the study of Riemann's function from both a geometric and physical point of view.
Geometric results for $R$ were obtained \cite{Duistermaat,ChamizoCordoba}, while the Hausdorff dimension and tangency properties of the image of $\phi$ were obtained by the second author \cite{Eceizabarrena0,Eceizabarrena1,Eceizabarrena2}. On the other hand, the study of the intermittency of Riemann's function in this paper follows the physical approach. We remark that the setting of the vortex filament equation explained here matches the setting of Kolmogorov's theory in Subsection~\ref{SUBSECTION_Multifractality} and specially in Remark~\ref{Remark_Dissipation}; indeed, this equation is derived from the Euler equation, which is itself a simplification of the Navier-Stokes equation with zero viscosity. Thus, in the analysis of the vortex filament equation and Riemann's function we expect no dissipation range, and phenomenology corresponding to the inertial range should be observed for scales as small as we want.

\section{Statement of the result}\label{SECTION_Statement_Of_The_Result}

\subsection{Setting and notation.}\label{SUBSECT_Notation}

Let $\T=\R/\Z$ be the circle. 
For $p \geq 1$, we denote by $L^p$ the Lebesgue space on the circle, $L^p(\mathbb{T})$, and by $\ell^p$ the Lebesgue sequence space $\ell^p(\mathbb{Z})$. We work with functions such that the corresponding Fourier series 
\begin{equation}\label{fourierseries}
 f(x)=\sum_{n\in\Z}a_ne_n(x)
\end{equation}
are absolutely convergent, where $\left(e_n\right)_{n\in\Z}$ is the orthonormal basis of $L^2(\mathbb{T})$ defined by $e_n(x)=e^{2\pi in x}$ 
and $a_n\in\C$ are the Fourier coefficients of $f$. In particular, this implies that $f$ is continuous, and therefore $f\in L^p(\mathbb{T})$ for every $p \in \left[ 1,+\infty \right]$. 

In the case of Riemann's non-differentiable function, we use the notation 
\[ R(x) = \sum_{n=1}^{\infty}{ \frac{e^{2\pi i n^2 x}}{ n^2 } } = \sum_{k=1}^{\infty}{ \frac{\sigma_k}{k} e^{2\pi i k x} }, \]
where $\sigma_k$ is defined by
\begin{equation}\label{IndicatorSquares}
\sigma_k =
\begin{cases}
	1, & \text{ if } k \text{ is the square of an integer}, \\
	0, & \text{ otherwise.	}
\end{cases}
\end{equation}

For two positive functions $f$ and $g$, we write $f\lesssim g$ to denote that there exists a constant $C>0$ such 
that $f\leq Cg$. We also write $f\simeq g$ to denote that $f\lesssim g$ and $g\lesssim f$. 
If the constants involved depend on some parameter $\alpha$, we write $f \lesssim_\alpha g$ and $f \simeq_\alpha g$.

For any $a,b \in \mathbb{R}$ such that $a < b$, we write $\sum_{n=a}^{b} = \sum_{n \in [a,b] \cap \mathbb{Z}}$ and  $\sum_{n>a}^{b} = \sum_{n \in (a,b] \cap \mathbb{Z}}$.

\subsection{Flatness and intermittency}\label{SUBSECT_Flatnesses_And_Main_Results}

In the deterministic setting of Riemann's non-differentiable function, as already suggested in Subsection~\ref{SUBSECTION_Multifractality}, definitions \eqref{FrischFlatFilters} and \eqref{FrischFlatStructure} need to be modified. The standard way to do so is to substitute $p$-moments by the $p$-th powers of $L^p$ norms, as was done when going from \eqref{FrischStructureFunction} to \eqref{StructureFunctionsJaffard}.

\begin{defn}\label{DefHP}
For $f:\mathbb{T} \to \mathbb{C}$ and $N \in \mathbb{N}$, we define the \textbf{high-pass filter} and the \textbf{low-pass filter} as the projections of $f$ on Fourier modes above $N$ and below $N$, respectively, defined by
\begin{equation}\label{HighPassFilterDefinition}
f_{\geq N}(x) = \sum_{|n| \geq N}{a_n e_n(x)} \qquad \text{ and } \qquad f_{\leq N}(x) = \sum_{|n| \leq N}{a_n e_n(x)},
\end{equation}
where $a_n$ are the Fourier coefficients of $f$. Filters $f_{> N}$ and $f_{< N}$ with strict inequalities are defined analogously. 
The \textbf{flatness} of $f$ in the sense of high-pass filtering is given by
	\begin{equation}\label{Flatness}
	F_f(N) = \frac{ \lVert f_{\geq N} \rVert^4_{L^4(\mathbb{T})} }{ \lVert f_{\geq N} \rVert^4_{L^2(\mathbb{T})} }.		
	\end{equation}
According to what we said after \eqref{FrischFlatFilters} in Subsection~\ref{SUBSECTION_Multifractality}, we say that $f$ is \textbf{intermittent in the sense of high-pass filtering} if
\[ \lim_{N \to +\infty}{ F_f(N)} = +\infty. \]
\end{defn}

\begin{defn}\label{DefSF}
Let $p \geq 1$, $f:\T\to\C$ a bounded and measurable function and $\ell \in [0,1]$. The \textbf{structure functions} of $f$ are defined by
\begin{equation}\label{StructureFunctions}
S_{f,p}(\ell) = \int_{\T}{|f(x+\ell)-f(x)|^p\,\mathrm dx}.
\end{equation} 
The \textbf{flatness} of $f$ in the sense of structure functions is
	\begin{equation}\label{StructureFlatness}
	G_f(\ell) = \frac{ S_{f,4}(\ell) }{ S_{f,2}(\ell)^2 }.		
	\end{equation}
Again, according to \eqref{FrischFlatStructure} in Subsection~\ref{SUBSECTION_Multifractality}, we say that $f$ is \textbf{intermittent in the sense of structure functions} if
\[ \lim_{\ell \to 0}{ G_f(\ell)} = +\infty. \]
\end{defn}

\begin{rmk}
If there is no risk of confusion regarding $f$, we write $S_p(\ell)$ instead of $S_{f,p}(\ell)$. 
\end{rmk}

\subsection{Main result}\label{SUBSECTION_MainResult}

The following theorem, the rigorous version of Theorem~\ref{TheoremIntroduction}, is the main result of the paper.
\begin{thm}\label{Thm1}
Let $R$ be Riemann's non-differentiable function \eqref{RiemannFunction}. 
There exist $N_0 \in \mathbb{N}$ and $0 < \ell_0 < 1$ such that for $N > N_0$ and $\ell < \ell_0$, we have 
\begin{equation}
    \lVert R_{\geq N} \rVert_4^4 \simeq N^{-3} \, \log N, \qquad \qquad \lVert R_{\geq N} \rVert_2^2 \simeq N^{-3/2}.
\end{equation}
and
\begin{equation}
    S_{R,4}(\ell) \simeq \ell^3\, \log (\ell^{-1} ), \qquad \qquad S_{R,2}(\ell) \simeq \ell^{3/2}.
\end{equation} 
These fourth order logarithmic corrections imply
\[ F_R(N)    \simeq \log N, \qquad \qquad G_R(\ell) \simeq \log (\ell^{-1}), \]
so $R$ is intermittent in the sense of both high-pass filtering and structure functions.
\end{thm}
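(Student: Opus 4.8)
The plan is to split the four estimates into two groups; the two second-order estimates are elementary and require no arithmetic. By Plancherel on $\T$,
\[
\lVert R_{\geq N}\rVert_2^2=\sum_{n\geq\sqrt N}\frac{1}{n^4}\simeq N^{-3/2},\qquad\qquad S_{R,2}(\ell)=\sum_{n\geq1}\frac{4\sin^2(\pi n^2\ell)}{n^4},
\]
and the second sum is handled by splitting at $n\simeq\ell^{-1/2}$: one bounds $\sin^2(\pi n^2\ell)$ above by $\min(\pi^2 n^4\ell^2,1)$, and below by $(2n^2\ell)^2$ whenever $n^2\ell\leq\tfrac12$, which gives $S_{R,2}(\ell)\simeq\ell^{3/2}$.

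The content lies in the fourth-order estimates. Since the nonzero Fourier coefficients of $R$ sit at the perfect squares $n^2$ with value $1/n^2$, expanding $\lVert R_{\geq N}\rVert_4^4=\int_\T|R_{\geq N}|^2\,|R_{\geq N}|^2$ gives
\[
\lVert R_{\geq N}\rVert_4^4=\sum_{\substack{m,n,p,q\geq\sqrt N\\ m^2+p^2=n^2+q^2}}\frac{1}{m^2n^2p^2q^2},
\]
and $S_{R,4}(\ell)$ admits the analogous expansion in which each coefficient $1/j^2$ is replaced by $(e^{2\pi i j^2\ell}-1)/j^2$ (conjugated for $n$ and $q$), of modulus $2|\sin(\pi j^2\ell)|/j^2$. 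I would isolate the \emph{diagonal} $\{m,p\}=\{n,q\}$, which contributes $\bigl(\sum_{m\geq\sqrt N}m^{-4}\bigr)^2\simeq N^{-3}$, and show that the off-diagonal part is $\simeq N^{-3}\log N$, so that it dominates and is responsible for the logarithmic correction.

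For the off-diagonal \textbf{upper bound} I would use Abel summation to write $R_{\geq N}(x)=\sum_{k\geq\sqrt N}\bigl(k^{-2}-(k+1)^{-2}\bigr)W_k(x)$ with $W_k(x)=\sum_{\sqrt N\leq n\leq k}e^{2\pi i n^2 x}$, apply the triangle inequality in $L^4$, and insert the classical estimate $\lVert W_k\rVert_4^4\leq\lVert\sum_{n\leq k}e^{2\pi i n^2 x}\rVert_4^4\lesssim k^2\log k$ (equivalently, the number of quadruples in $[1,k]^4$ with $n_1^2+n_2^2=n_3^2+n_4^2$ is $\lesssim k^2\log k$, obtained from $n_1^2-n_3^2=n_4^2-n_2^2$ by a divisor estimate; the inequality for $W_k$ follows by monotonicity of additive energy under subsets). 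Since $k^{-2}-(k+1)^{-2}\simeq k^{-3}$, summing the resulting series over $k\geq\sqrt N$ gives $\lVert R_{\geq N}\rVert_4^4\lesssim N^{-3}\log N$; alternatively one may decompose the frequency range dyadically and reduce, via Cauchy--Schwarz between blocks and the Littlewood--Paley-type considerations of Appendix~\ref{APPENDIX_LittlewoodPaley}, to the additive-energy bound $\#\{(a,b,c,d)\in[M,2M]^4:a^2+b^2=c^2+d^2\}\lesssim M^2\log M$. For the \textbf{lower bound}, restricting the quadruple sum to $m,n,p,q\in[\sqrt N,2\sqrt N]$ makes every weight $\simeq N^{-4}$, so it suffices to know that the same additive energy is $\gtrsim M^2\log M$, with the logarithm coming genuinely from off-diagonal solutions; this two-sided count (lattice points on circles, or the $L^4$ norm of partial Gauss sums) is the auxiliary input of Appendix~\ref{APPENDIX_Zalcwasser}. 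The structure-function estimates are then obtained by running the same argument with the effective cutoff $\ell^{-1/2}$ in place of $\sqrt N$: for $n\lesssim\ell^{-1/2}$ one has $2|\sin(\pi n^2\ell)|\simeq n^2\ell$, so those frequencies contribute like $\ell^4$ times a Gauss-sum additive energy, while the frequencies $n\gtrsim\ell^{-1/2}$ carry weights of size $n^{-2}$ and behave like the high-pass filter at scale $\ell^{-1}$; combining the ranges gives $S_{R,4}(\ell)\simeq\ell^3\log(\ell^{-1})$. Dividing the fourth- and second-order estimates then yields $F_R(N)\simeq\log N$ and $G_R(\ell)\simeq\log(\ell^{-1})$, hence intermittency in both senses.

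The main obstacle is twofold. The core input is the \emph{sharp, two-sided} estimate $\#\{(a,b,c,d)\in[M,2M]^4:a^2+b^2=c^2+d^2\}\simeq M^2\log M$: the upper bound is a routine divisor computation, but the lower bound is delicate, since one must produce enough off-diagonal quadruples to generate the genuine factor $\log M$ and verify that they survive once the variables carry the weights $1/j^2$ (rather than being uniformly $\simeq N^{-4}$ as in the restricted box). Second, in the structure-function case one must control the cross terms in which some frequencies lie below and others above $\ell^{-1/2}$ and show that they do not corrupt the precise logarithm; the oscillatory weights $2\sin(\pi j^2\ell)$ prevent one from simply taking absolute values in the lower bound and make this bookkeeping more delicate than for the sharp Fourier projections $R_{\geq N}$.
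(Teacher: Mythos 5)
Your second-order estimates and your $L^4$ upper bound are sound, and the upper bound is a genuinely different (and arguably slicker) route than the paper's: the paper never touches the quadruple sum directly, but instead runs a Littlewood--Paley decomposition $R_{\geq N}=\sum_j\Delta_jR_{\geq N}$ with blocks $[A^j,A^{j+1}]$, pulls the now-comparable denominators out of each block, applies Zalcwasser's theorem blockwise, and sums the pieces with H\"older's inequality; your Abel summation $R_{\geq N}=\sum_k(k^{-2}-(k+1)^{-2})W_k$ combined with $\lVert W_k\rVert_4^4\lesssim k^2\log k$ reaches $\lVert R_{\geq N}\rVert_4^4\lesssim N^{-3}\log N$ with less machinery.

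The genuine gap is in the lower bound, and it is exactly at the point you flag as "delicate." You restrict the quadruple sum to $m,n,p,q\in[\sqrt N,2\sqrt N]$ and then need the two-sided count $\#\{(a,b,c,d)\in[M,2M]^4:a^2+b^2=c^2+d^2\}\gtrsim M^2\log M$, which you attribute to Appendix~\ref{APPENDIX_Zalcwasser}. But Zalcwasser's theorem only gives $\lVert\sum_{m=1}^Ne^{2\pi im^2x}\rVert_4^4\simeq N^2\log N$ for the \emph{full} range $[1,N]$, with unspecified constants $c_4<C_4$. Passing to the dyadic block via the reverse triangle inequality yields $\lVert\sum_{M}^{2M}e^{2\pi im^2x}\rVert_4\geq(c_4(2M)^2\log 2M)^{1/4}-(C_4M^2\log M)^{1/4}$, which is positive only if $4c_4>C_4$ --- and nothing guarantees that. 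This is precisely why the paper does \emph{not} use dyadic blocks: it takes blocks $[A^j,A^{j+1}]$ and chooses $A$ so large that $(c_4A)^{1/4}-C_4^{1/4}=1$ (equation \eqref{ChoiceOfA}), which makes the reverse triangle inequality bite. Your argument is repaired by the same device (restrict to $[\sqrt N,A\sqrt N]$ with $A$ large; the weights are still uniformly $\simeq_AN^{-4}$ there), but as written the lower bound rests on an unproved dyadic additive-energy estimate. A second, smaller gap is in the structure-function lower bound: you correctly worry about the cross terms between frequencies below and above $\ell^{-1/2}$, but you do not resolve the worry. The paper's resolution is the observation that the Fourier coefficients $2\sin(\pi k\ell)/k\cdot\sigma_k$ of the increment $I(\ell)$ are \emph{nonnegative} for $1\leq k\leq\ell^{-1}$, whence $\lVert I_{\leq\ell^{-1}/2}(\ell)\rVert_4^4=\lVert(I_{\leq\ell^{-1}/2}(\ell))^2\rVert_2^2\leq\lVert(I(\ell)^2)_{\leq\ell^{-1}}\rVert_2^2\leq\lVert I(\ell)\rVert_4^4$; without this positivity trick (or some substitute) the low-frequency piece alone does not give a lower bound on $S_4(\ell)$.
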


\begin{rmk}
Both \eqref{HighPassFilterDefinition} and \eqref{StructureFunctions} capture the small-scale behavior of $R$. Indeed, the high-pass filter in $N$ implies working with oscillations smaller than $N^{-1}$, while with structure functions we directly measure differences in scale $\ell$. Thus, there is a natural identification of the small-scale parameters $N^{-1}$ and $\ell$, so $F_R(N)$ and $G_R(\ell^{-1})$ should measure the same phenomenon. The result in Theorem~\ref{Thm1} is consistent with this fact.
\end{rmk}

\subsection{Discussion}

Let us discuss the intermittency of Riemann's function from the point of view of its graph, as well as from the perspective of the evolution of vortex filaments.

The set $R([0,1])$ is not self-similar, but the asymptotic behaviour of $R$ in \cite{Duistermaat} and also Figure~\ref{Figure_Plot} reveal at least the presence of some approximate self-similar structure. Therefore, if we understand intermittency as a measure of the lack of self-similarity, $R$ should have somewhat weak intermittent properties, and its flatness should show this. The logarithmic growth of both $F_R$ and $G_R$ in Theorem~\ref{Thm1} agrees with this interpretation.

Regarding the evolution of polygonal vortex filaments, our result is related to a couple of interesting open questions:
\begin{itemize}
 
    \item Supported by numerical evidence \cite{HozVega}, the natural conjecture is that Riemann's function is the limit of the trajectories of the corners when the number of sides of the polygon tends to infinity, which has only been proved for a modified version of the polygons \cite{BanicaVega2020}. By the turbulent nature of vortex filaments, it is reasonable to expect that the trajectory of the corners is intermittent. Thus, the proof that Riemann's function is intermittent further supports the conjecture.

   \item In addition to determining if, for a fixed $M\in \mathbb N$, the trajectory of the corners of the $M$-sided polygon is intermittent, one more question is whether it satisfies the multifractal formalism.  
    
\end{itemize}

\section{Intermittency in the sense of high-pass filters}\label{SECTION_HighPassFilters}

To prove the part of Theorem~\ref{Thm1} concerning high-pass filters, we will use the Littlewood-Paley decomposition of $R$, as well as a result of Zalcwasser \cite{Zalcwasser} on the $L^4$ norm of the sum of square-phased exponentials. Both results are stated in Appendixes~\ref{APPENDIX_LittlewoodPaley} and \ref{APPENDIX_Zalcwasser}.

We estimate the $L^2$ norm of the high-pass filter first.
\begin{lem}\label{LemmaL2}
For every $N \geq 2$, 
\[ \lVert R_{\geq N} \rVert_{L^2(\mathbb{T})} \simeq N^{-3/4}. \]
\end{lem}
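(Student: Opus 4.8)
The plan is to compute the $L^2$ norm by Parseval's identity and then estimate the resulting tail sum over squares. Since $R(x) = \sum_{k=1}^\infty \frac{\sigma_k}{k} e_k(x)$ with $\sigma_k$ the indicator of perfect squares defined in \eqref{IndicatorSquares}, the high-pass filter is $R_{\geq N}(x) = \sum_{k \geq N} \frac{\sigma_k}{k} e_k(x)$ (note all frequencies of $R$ are positive, so the condition $|k| \geq N$ reduces to $k \geq N$). By orthonormality of the $e_k$,
\begin{equation}
\lVert R_{\geq N} \rVert_{L^2(\mathbb{T})}^2 = \sum_{k \geq N} \frac{\sigma_k}{k^2} = \sum_{n \geq \sqrt{N}} \frac{1}{n^4},
\end{equation}
where in the last step I reindex $k = n^2$ so that $\sigma_k = 1$ exactly when $k$ runs over squares. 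The lower limit is $n \geq \lceil \sqrt{N} \rceil$.

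It then remains to show $\sum_{n \geq \sqrt N} n^{-4} \simeq N^{-3/2}$. This is a routine comparison of a monotone sum with an integral: $\sum_{n \geq m} n^{-4} \simeq \int_m^\infty t^{-4}\, dt = \tfrac13 m^{-3}$, uniformly for $m \geq 1$, and here $m = \lceil \sqrt N \rceil \simeq \sqrt N$ for $N \geq 2$. Hence $\lVert R_{\geq N} \rVert_{L^2}^2 \simeq (\sqrt N)^{-3} = N^{-3/2}$, and taking square roots gives $\lVert R_{\geq N} \rVert_{L^2} \simeq N^{-3/4}$.

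There is essentially no obstacle here: unlike the $L^4$ estimate — which genuinely requires the arithmetic of counting representations as sums of two squares and the Zalcwasser bound to detect the logarithmic correction — the $L^2$ norm is insensitive to the fine structure of the square frequencies and only sees the decay of the coefficients $k^{-2}$ along the sparse set of squares. The only mild point to state carefully is that the implicit constants in the integral comparison are absolute and valid for all $N \geq 2$, which fixes the threshold in the statement. One could alternatively invoke the Littlewood--Paley decomposition of Appendix~\ref{APPENDIX_LittlewoodPaley} to organize the dyadic blocks, but for the $L^2$ norm the direct Parseval computation is cleaner and I would present it that way.
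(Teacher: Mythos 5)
Your proposal is correct and follows essentially the same route as the paper: Plancherel's theorem, reindexing the sum over squares via $k=n^2$, and comparison of $\sum_{n\geq\sqrt N}n^{-4}$ with the integral $\int_{\sqrt N}^\infty x^{-4}\,\mathrm dx \simeq N^{-3/2}$. Your extra remarks on uniformity of constants and the irrelevance of the arithmetic of sums of two squares at the $L^2$ level are accurate but not needed beyond what the paper already does.
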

\begin{proof}
By Plancherel's theorem, we get
 \[  \lVert R_{\geq N} \rVert_{L^2(\mathbb{T})}^2 
= \int_0^1{ \left| \sum_{n=N}^{\infty}\frac{\sigma_n}{n}\,e^{2\pi i n x} \right|^2\,\mathrm dx } 
= \sum_{n=N}^{\infty} \frac{\sigma_n}{n^2} 
= \sum_{n=\sqrt{N}}^\infty \frac{1}{n^4} \simeq \int_{\sqrt{N}}^\infty \frac{\mathrm dx}{x^4} = \frac{1}{N^{3/2}}.   \]
\end{proof}

To compute the $L^4$-norm of the high-pass filter, one may try to use Plancherel's theorem again, since $\lVert f \rVert_4^4 = \lVert f^2 \rVert_2^2$ holds. However, 
\begin{equation}\label{NumberTheory}
R_{\geq N}^2(x) = \sum_{k=2N}^\infty \left( \sum_{n=N}^{k-N} \frac{\sigma_n\,\sigma_{k-n}}{n\,(k-n)} \right)\,e^{2\pi i k x}, 
\end{equation}  
whose Fourier coefficients are related to $\sum_{n=N}^{k-N} \sigma_n\,\sigma_{k-n}$, the number of ways in which $k$ can be written as a sum of two squares both of which are greater than $N$. The study of such sums is a classical problem in number theory and can be very technical. Instead, the Fourier series of $R$ can be decomposed in frequency pieces that act almost independently by the Littlewood-Paley decomposition. We use this technique to prove the following lemma.
\begin{lem}\label{LemmaL4LowerBound}
There exists $N_0 \in \mathbb{N}$ such that 
\[  \lVert R_{\geq N} \rVert_{L^4(\mathbb{T})} \gtrsim N^{-3/4}\, (\log{N})^{1/4}, \qquad \forall N \geq N_0. \]
\end{lem}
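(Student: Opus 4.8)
The plan is to prove the lower bound $\lVert R_{\geq N}\rVert_{L^4(\mathbb{T})} \gtrsim N^{-3/4}(\log N)^{1/4}$ by isolating a single dyadic Littlewood--Paley block of $R$ above the threshold $N$ and showing that block alone already has $L^4$ norm of the claimed size. Concretely, write $R_{\geq N} = \sum_j \Delta_j R_{\geq N}$, where $\Delta_j$ projects onto frequencies $k \simeq 2^j$, and pick $j_0$ with $2^{j_0} \simeq N$ so that $\Delta_{j_0} R_{\geq N}$ (or the next block up) consists of the terms $\sum_{n} \frac{\sigma_n}{n} e^{2\pi i n x}$ with $n$ ranging over an interval $[cN, CN]$ containing a positive proportion of the integers in a dyadic block. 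On this block $n^{-1} \simeq N^{-1}$, so $\Delta_{j_0} R_{\geq N}(x) \simeq N^{-1} \sum_{m \in I} e^{2\pi i m^2 x}$ where $I$ is an interval of length $\simeq \sqrt{N}$ of integers $m$ (since $n = m^2$ runs over squares in $[cN, CN]$). By the Zalcwasser-type result quoted in Appendix~\ref{APPENDIX_Zalcwasser}, the $L^4$ norm of an exponential sum $\sum_{m \in I} e^{2\pi i m^2 x}$ over $|I| = L$ consecutive integers is $\simeq L^{1/2}(\log L)^{1/4}$; with $L \simeq \sqrt N$ this gives $\lVert \Delta_{j_0} R_{\geq N}\rVert_4 \simeq N^{-1} \cdot N^{1/4}(\log N)^{1/4} = N^{-3/4}(\log N)^{1/4}$.

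The remaining point is to pass from a single block to $R_{\geq N}$ itself, i.e.\ to rule out destructive cancellation between blocks. The clean way is the standard Littlewood--Paley lower bound for the $L^4$ norm: since the frequency supports of distinct $\Delta_j R$ are disjoint and lacunary, one has $\lVert R_{\geq N}\rVert_{L^4} \gtrsim \lVert \Delta_{j_0} R_{\geq N}\rVert_{L^4}$ up to an absolute constant (this is exactly the sort of statement Appendix~\ref{APPENDIX_LittlewoodPaley} should provide: the $L^p$ norm is comparable to the $L^p$ norm of the square function $(\sum_j |\Delta_j f|^2)^{1/2}$, which dominates any single $|\Delta_{j_0} f|$ pointwise). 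So I would invoke the Littlewood--Paley inequality to get $\lVert R_{\geq N}\rVert_4 \gtrsim \lVert (\sum_j |\Delta_j R_{\geq N}|^2)^{1/2}\rVert_4 \gtrsim \lVert \Delta_{j_0} R_{\geq N}\rVert_4$, and then combine with the block estimate above.

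The main obstacle, and the step requiring care, is making the reduction to the exponential sum over \emph{consecutive} integers $m$ rigorous and checking that the dyadic block genuinely captures $\simeq \sqrt{N}$ squares whose coefficients $n^{-2}$ in $R$ (equivalently $m^{-4}$, or $n^{-1}$ in the $\sigma_k/k$ normalization used in the excerpt) are all comparable to a single power of $N$. One must be slightly careful about whether to use $R_{\geq N}$ in its $\sum \sigma_n e^{2\pi i n x}/n$ form or its $\sum e^{2\pi i n^2 x}/n^2$ form, but they agree, and on the block $n \simeq N$ the coefficient $n^{-2} \simeq N^{-2}$ while $\sum e^{2\pi i n^2 x}$ over $n$ with $n^2 \in [cN, CN]$ is a sum over $m := n \in [\sqrt{cN}, \sqrt{CN}]$, an interval of $\simeq \sqrt N$ consecutive integers — so the quoted Zalcwasser bound applies verbatim. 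A secondary technical nuisance is the boundary between "$\geq N$" and dyadic blocks (choosing $j_0$ so that the block lies strictly above $N$), and extracting the constant $N_0$; these are routine once the block is fixed. One could also, as an alternative to Littlewood--Paley, argue more directly by expanding $\lVert R_{\geq N}\rVert_4^4 = \lVert R_{\geq N}^2\rVert_2^2$ and noting that the diagonal-type contribution from a single block is positive and already of the right size while cross terms live at separated frequencies, but the Littlewood--Paley route is cleaner and is what the appendix is set up for.
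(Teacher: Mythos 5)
Your overall architecture --- Littlewood--Paley square function to reduce to a single complete block above $N$, then Zalcwasser on that block --- is exactly the paper's, and the surrounding steps (the lower bound $\lVert R_{\geq N}\rVert_4\gtrsim\lVert\Delta_{j_0}R\rVert_4$ via the square function, factoring out the coefficients $n^{-1}\simeq N^{-1}$ on the block, and identifying the block with a sum over $\simeq\sqrt N$ consecutive values of $m$) are all correct. The gap is in the sentence ``the quoted Zalcwasser bound applies verbatim.'' It does not: Theorem~\ref{ZalcwasserTheoremAppendix} estimates $\lVert\sum_{m=1}^{M}e^{2\pi i m^2x}\rVert_4$ for sums starting at $m=1$, whereas your block requires a \emph{lower} bound for $\lVert\sum_{m=M_1}^{M_2}e^{2\pi i m^2x}\rVert_4$ with $M_1\simeq M_2\simeq\sqrt N$. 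The upper bound for such a shifted sum follows from the triangle inequality, but the lower bound is not immediate; the only cheap route from the appendix is the reverse triangle inequality,
\[
\Bigl\lVert \sum_{m=M_1}^{M_2} e^{2\pi i m^2 x}\Bigr\rVert_4 \;\geq\; \Bigl\lVert \sum_{m=1}^{M_2} e^{2\pi i m^2 x}\Bigr\rVert_4 - \Bigl\lVert \sum_{m=1}^{M_1-1} e^{2\pi i m^2 x}\Bigr\rVert_4 \;\geq\; \bigl(c_4\,M_2^2\log M_2\bigr)^{1/4} - \bigl(C_4\,M_1^2\log M_1\bigr)^{1/4},
\]
and the right-hand side is positive (and of the claimed size) only if $(M_2/M_1)^2$ beats the ratio $C_4/c_4$ of Zalcwasser's implicit constants. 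With your dyadic blocks $M_2/M_1\simeq\sqrt 2$, so you would need $2c_4>C_4$, which nothing guarantees. This is precisely why the paper does not take the usual $A=2$ but leaves the Littlewood--Paley ratio $A$ as a free parameter and fixes it at the end by $(cA)^{1/4}-C^{1/4}=1$, so that the subtraction above leaves a positive constant.

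The repair is small but essential: either (i) run your argument with blocks $[A^j,A^{j+1})$ for a sufficiently large constant $A$ depending on Zalcwasser's constants, as the paper does, or (ii) prove an interval version of Zalcwasser's lower bound for genuinely dyadic blocks directly (e.g.\ by counting solutions of $a^2+b^2=c^2+d^2$ in a shifted interval), which is additional work not supplied by the appendix. As written, the proposal asserts the key estimate at exactly the point where the paper has to work hardest.
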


\begin{proof}
According to Appendix~\ref{APPENDIX_LittlewoodPaley},
%Theorem~\ref{LittlewoodPaleyTheoremAppendix}, 
the Littlewood-Paley decomposition of $R_{\geq N}$ is
\begin{equation}\label{LittlewoodPaleyDecomposition}
R_{\geq N}(x) = \sum_{j=1}^{\infty}\Delta_jR_{\geq N}(x), 
\end{equation} 
where the Littlewood-Paley pieces are
\[  \Delta_0R_{\geq N}(x) = \sum_{1 \leq n < A}\frac{\sigma_n}{n}\,e^{2\pi i n x} \qquad \text{ and } 
\qquad \Delta_jR_{\geq N}(x) = \sum_{A^j \leq n < A^{j+1}}\frac{\sigma_n}{n}\,e^{2\pi i n x}, \qquad \forall j \in \mathbb{N}.   \]
The value of $A$ will be chosen later (see \eqref{ChoiceOfA}).
We define $j(N)$ as the index corresponding to the piece containing the $N$-th Fourier coefficient, the only one satisfying $A^{j(N)} \leq N < A^{j(N)+1}$. Then, $\Delta_j R_{\geq N} = 0$ for every $j < j(N)$. By the Littlewood-Paley theorem,
%Theorem~\ref{LittlewoodPaleyTheoremAppendix},
 we may write the inequality
\begin{equation}\label{LittlewoodPaleyBoundBelow}
\lVert R_{\geq N} \rVert_{L^4(\mathbb{T})} \simeq \left\lVert \left( \sum_{j \geq j(N)} \left|  \Delta_jR_{\geq N}\right|^2 \right)^{1/2}  \right\rVert_{L^4(\mathbb{T})} \geq \lVert \Delta_{i(N)}R \rVert_{L^4(\mathbb{T})}, 
\end{equation} 
where $i(N) = j(N)+1$, using that $\Delta_{i(N)}R = \Delta_{i(N)}R_{\geq N}$. The choice of $i(N)$ comes from the fact that it is the first complete Littewood-Paley piece after $j(N)$, which is truncated as a consequence of the high-pass filter. 

Let us estimate $\lVert \Delta_{i(N)}R \rVert_{L^4(\mathbb{T})}$. As in \eqref{NumberTheory}, we use Plancherel's theorem to write
\begin{equation}\label{PlancherelInLittlewoodPaley}
\lVert \Delta_{i(N)}R \rVert_{L^4}^4 =  \lVert (\Delta_{i(N)}R)^2 \rVert_{L^2}^2 
= \left\lVert \left( \sum_{n = A^{i(N)}}^{A^{i(N)+1}} \frac{\sigma_n}{n} \,e^{2\pi i n x} \right)^2  \right\rVert_{L^2}^2  
\simeq \sum_{k=2A^{i(N)}}^{2A^{i(N)+1}} \left| \sum_{n} \frac{\sigma_n\,\sigma_{k-n}}{n\,(k-n)} \right|^2, 
\end{equation}  
where the index $n$ must satisfy $A^{i(N)} \leq n \leq A^{i(N)+1}$ and $A^{i(N)} \leq k- n \leq A^{i(N)+1}$. In both cases, $n \simeq A^{i(N)}$ and $k-n \simeq A^{i(N)}$. Hence, we can take the denominators outside the sum:
\begin{equation}\label{RemovingDenominator}
 \lVert \Delta_{i(N)}R \rVert_{L^4}^4  
\simeq \frac{1}{A^{4i(N)}} \, \sum_{k=2A^{i(N)}}^{2A^{i(N)+1}} \left| \sum_{n} \sigma_n\,\sigma_{k-n} \right|^2 
= \frac{1}{A^{4i(N)}} \,  \left\lVert \sum_{n = A^{i(N)}}^{A^{i(N)+1}} \sigma_n\,e^{2\pi i n x}  \right\rVert_{L^4}^4.  
\end{equation}   
This is a sum of exponentials with squared phases, whose $L^p$ norms were computed by Zalcwasser \cite{Zalcwasser}. First, the triangle inequality gives 
\begin{equation}\label{TriangleInequality}
\left\lVert \sum_{n = A^{i(N)/2}}^{A^{(i(N)+1)/2}} e^{2\pi i n^2 x}  \right\rVert_{L^4}  
\lesssim  \left\lVert \sum_{n = 1}^{A^{(i(N)+1)/2}} e^{2\pi i n^2 x}  \right\rVert_{L^4} + \left\lVert \sum_{n = 1}^{A^{i(N)/2}} e^{2\pi i n^2 x}  \right\rVert_{L^4},
\end{equation}
so by Zalcwasser's theorem in Appendix~\ref{APPENDIX_Zalcwasser} we get, for large enough $N$,
\begin{equation}\label{UpperBoundForPiece}
\begin{split}
\lVert \Delta_{i(N)}R \rVert_{L^4} & \lesssim \frac{1}{A^{i(N)}}\, \left( \left( A^{i(N)+1}\log{A^{(i(N)+1)/2}}  \right)^{1/4}  + \left( A^{i(N)}\log{A^{i(N)/2}} \right)^{1/4} \right) \\
& \lesssim \frac{1}{A^{i(N)}} \, \left(A^{i(N)}\,\log A^{i(N)/2}\right)^{1/4}  \lesssim A^{-3i(N)/4}\, \left( \log A^{i(N)}\right)^{1/4} .
\end{split}
\end{equation}
On the other hand, using the reverse triangle inequality in \eqref{TriangleInequality}, we get
\begin{equation}\label{InverseTriangleInequality}
\left\lVert \sum_{n = A^{i(N)/2}}^{A^{(i(N)+1)/2}} e^{2\pi i n^2 x}  \right\rVert_{L^4}  
\gtrsim   \left\lVert \sum_{n = 1}^{A^{(i(N)+1)/2}} e^{2\pi i n^2 x}  \right\rVert_{L^4} - \left\lVert \sum_{n = 1}^{A^{i(N)/2}} e^{2\pi i n^2 x}  \right\rVert_{L^4} .
\end{equation}   
Let us denote the constants in Zalcwasser's theorem for $p=4$
%Theorem~\ref{ZalcwasserTheoremAppendix} 
by $0 < c < C$. We get
\begin{equation}\label{LowerBoundForPiece}
\begin{split}
\lVert \Delta_{i(N)}R \rVert_{L^4}  & \gtrsim \frac{1}{A^{i(N)}}\, \left(  \left( c\,A^{i(N)+1}\log{A^{(i(N)+1)/2}}  \right)^{1/4}  -  \left( C\, A^{i(N)}\log{A^{i(N)/2}} \right)^{1/4} \right) \\
& = \frac{1}{A^{i(N)}}\, (A^{i(N)}\,\log{A^{i(N)/2}})^{1/4}\, \left( (c\, A(1 + i(N)^{-1}))^{1/4} - C^{1/4} \right) \\
& \gtrsim  A^{-3i(N)/4}\, \left(\log A^{i(N)} \right)^{1/4}  \, \left( c^{1/4}A^{1/4}-C^{1/4} \right).
\end{split}
\end{equation}    
Finally, choose $A$ so that 
\begin{equation}\label{ChoiceOfA}
(cA)^{1/4}-C^{1/4} = 1.
\end{equation}  
Observe that in the proof of \eqref{UpperBoundForPiece} and \eqref{LowerBoundForPiece} we may replace $i(N)$ by any $j \geq i(N)$, so we have proved that 
\begin{equation}\label{EstimateForLittlewoodPaleyPieces}
\lVert \Delta_jR \rVert_{L^4} \simeq A^{-3j/4}\,\left(\log A^j  \right)^{1/4}, \qquad \forall j \geq i(N).
\end{equation}
Coming back to $j=i(N)$, from \eqref{LittlewoodPaleyBoundBelow}, since $A^{i(N)-1} \leq N < A^{i(N)}$, we get
\[  \lVert R_{\geq N} \rVert_{L^4(\mathbb{T})} \gtrsim  N^{-3/4}\,\left(\log N \right)^{1/4}.  \]
\end{proof}

Lemma~\ref{LemmaL4LowerBound} suffices to prove that the flatness of Riemann's non-differentiable function tends to infinity. However, we can be more precise and show that the lower bound in the lemma is sharp. 
\begin{lem}\label{LemmaL4UpperBound}
There exists $N_0 \in \mathbb{N}$ such that 
\[  \lVert R_{\geq N} \rVert_{L^4(\mathbb{T})} \lesssim N^{-3/4}\, (\log{N})^{1/4}, \qquad \forall N \geq N_0. \]
\end{lem}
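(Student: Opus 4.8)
The plan is to mirror the proof of Lemma~\ref{LemmaL4LowerBound}, but now exploiting the full Littlewood--Paley square function in its upper form. Since the pieces $\Delta_j R_{\geq N}$ act almost independently, by the Littlewood--Paley theorem we have
\[
\lVert R_{\geq N} \rVert_{L^4(\mathbb{T})} \simeq \left\lVert \left( \sum_{j \geq j(N)} \left| \Delta_j R_{\geq N} \right|^2 \right)^{1/2} \right\rVert_{L^4(\mathbb{T})}.
\]
The idea is that $R_{\geq N}$ differs from $\sum_{j \geq i(N)} \Delta_j R$ only by the single truncated piece at level $j(N)$, whose $L^4$ norm is bounded by $\lVert \Delta_{j(N)} R \rVert_{L^4} \lesssim A^{-3j(N)/4} (\log A^{j(N)})^{1/4} \lesssim N^{-3/4} (\log N)^{1/4}$ via the same computation \eqref{UpperBoundForPiece} (valid for $j(N)$ as well since the argument there only used the triangle inequality and Zalcwasser). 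So it suffices to bound $\big\lVert \sum_{j \geq i(N)} \Delta_j R \big\rVert_{L^4}$.

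First I would control the square function by Minkowski's inequality in $L^2(\mathbb{T})$, or more directly use the triangle inequality in $L^4$ together with the almost-orthogonality of the pieces. The cleanest route is: by Minkowski's integral inequality applied to $\lVert \cdot \rVert_{L^2(\mathbb{T})}$ of the $\ell^2_j$ sum,
\[
\left\lVert \left( \sum_{j \geq i(N)} |\Delta_j R|^2 \right)^{1/2} \right\rVert_{L^4}^2 = \left\lVert \sum_{j \geq i(N)} |\Delta_j R|^2 \right\rVert_{L^2} \leq \sum_{j \geq i(N)} \lVert |\Delta_j R|^2 \rVert_{L^2} = \sum_{j \geq i(N)} \lVert \Delta_j R \rVert_{L^4}^2.
\]
Now plug in the sharp per-piece estimate \eqref{EstimateForLittlewoodPaleyPieces}, namely $\lVert \Delta_j R \rVert_{L^4}^2 \simeq A^{-3j/2} \log A^j = A^{-3j/2} \, j \log A$. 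The resulting series $\sum_{j \geq i(N)} j \, A^{-3j/2}$ is dominated by its first term (geometric decay beats the linear factor), giving $\simeq i(N) \, A^{-3 i(N)/2}$. Taking square roots, and using $A^{i(N)-1} \leq N < A^{i(N)}$ so that $A^{i(N)} \simeq N$ and $i(N) \simeq \log N$, this yields $\big\lVert \sum_{j \geq i(N)} \Delta_j R \big\rVert_{L^4} \lesssim N^{-3/4} (\log N)^{1/4}$. Combining with the truncated-piece bound and the Littlewood--Paley equivalence completes the proof.

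The main obstacle I anticipate is purely bookkeeping rather than conceptual: one must make sure the per-piece estimate \eqref{EstimateForLittlewoodPaleyPieces} is genuinely available as a \emph{two-sided} bound for \emph{all} $j \geq i(N)$ (the excerpt does establish this, as remarked right after \eqref{LowerBoundForPiece}), and one must double-check that the single incomplete piece $\Delta_{j(N)} R_{\geq N}$ really does obey the same upper bound $\lesssim N^{-3/4}(\log N)^{1/4}$ — this is immediate because $\Delta_{j(N)} R_{\geq N}$ is a \emph{sub-sum} of $\Delta_{j(N)} R$ over a shorter range of $n$, so the triangle-inequality argument in \eqref{TriangleInequality}--\eqref{UpperBoundForPiece} applies verbatim with $A^{(i(N)-1)/2}$ in place of some of the endpoints, and $A^{i(N)-1} \leq N$. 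A minor point is justifying the use of the Littlewood--Paley upper inequality for the filtered function $R_{\geq N}$; this is standard and already invoked in Lemma~\ref{LemmaL4LowerBound}, so I would simply cite Appendix~\ref{APPENDIX_LittlewoodPaley} again. No number-theoretic input on sums of two squares is needed, which is exactly the point of routing everything through the Littlewood--Paley decomposition and Zalcwasser's theorem.
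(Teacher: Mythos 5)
Your overall strategy is the same as the paper's: decompose $R_{\geq N}$ into Littlewood--Paley pieces, treat the truncated piece $\Delta_{j(N)}R_{\geq N}$ separately by rerunning the triangle-inequality-plus-Zalcwasser argument (this is exactly the paper's \eqref{EstimateForJPiece}), and sum the per-piece bounds \eqref{EstimateForLittlewoodPaleyPieces} over the tail $j \geq i(N)$, where geometric decay dominates. The only structural difference is how you sum the tail: the paper applies the plain triangle inequality in $L^4$ and then H\"older's inequality to control $\sum_{j\geq i(N)} j^{1/4}A^{-3j/4}$, whereas you route through the square function and Minkowski in $L^2$ to reduce to $\sum_{j\geq i(N)}\lVert\Delta_jR\rVert_{L^4}^2$. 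Both routes are legitimate and give the same answer.

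There is, however, one concrete arithmetic slip that, as written, loses the sharp exponent. Squaring \eqref{EstimateForLittlewoodPaleyPieces} gives
\[
\lVert \Delta_j R \rVert_{L^4}^2 \simeq A^{-3j/2}\,\bigl(\log A^j\bigr)^{1/2} = A^{-3j/2}\,(j\log A)^{1/2},
\]
not $A^{-3j/2}\, j\log A$ as you wrote. With your version the tail sums to $\simeq i(N)\,A^{-3i(N)/2}$ and taking square roots yields $i(N)^{1/2}A^{-3i(N)/4}\simeq N^{-3/4}(\log N)^{1/2}$, which is strictly weaker than the claimed $N^{-3/4}(\log N)^{1/4}$ and would not suffice for the sharp upper bound (nor for the matching $F_R(N)\simeq\log N$). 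With the corrected exponent the series is $\sum_{j\geq i(N)} j^{1/2}A^{-3j/2}\simeq i(N)^{1/2}A^{-3i(N)/2}$, whose square root is $i(N)^{1/4}A^{-3i(N)/4}\simeq N^{-3/4}(\log N)^{1/4}$, exactly as needed. So the gap is a fixable miscomputation rather than a flaw in the method. One stylistic caution: avoid phrasing the truncated-piece bound as ``bounded by $\lVert\Delta_{j(N)}R\rVert_{L^4}$,'' since an $L^4$ norm of a Fourier sub-sum is not a priori dominated by that of the full sum; your later remark that the Zalcwasser argument applies directly to the truncated range is the correct justification and is what the paper does.
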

\begin{proof}
Applying the triangle inequality in the Littlewood-Paley decomposition \eqref{LittlewoodPaleyDecomposition}, we write
\begin{equation}\label{LittlewoodPaleyBoundAbove}
\lVert R_{\geq N} \rVert_{L^4} \leq \lVert \Delta_{j(N)}R_{\geq N} \rVert_{L^4} 
+ \sum_{j \geq i(N)} \lVert \Delta_j R \rVert_{L^4}.
\end{equation}
Using \eqref{EstimateForLittlewoodPaleyPieces} we can estimate $\lVert \Delta_j R \rVert_{L^4}$ for $j \geq i(N)$. To deal with the index $j(N)$, following the arguments in
  \eqref{PlancherelInLittlewoodPaley}, \eqref{RemovingDenominator}, \eqref{TriangleInequality} and \eqref{UpperBoundForPiece} 
and using $A^{j(N)} \leq N < A^{j(N)+1}$, we write
\begin{equation}\label{EstimateForJPiece}
\begin{split}
 \lVert \Delta_{j(N)}R_{\geq N} \rVert_{L^4} &  
\simeq \frac{1}{N}\, \left\lVert \sum_{n = N}^{A^{j(N)+1}} \sigma_n\,e^{2\pi i n x}  \right\rVert_{L^4} 
\lesssim \frac{1}{N}\, \left( \left\rVert \sum_{n = 1}^{A^{j(N)+1}} \sigma_n\,e^{2\pi i n x}  \right\rVert_{L^4} 
+  \left\rVert  \sum_{n = 1}^N \sigma_n\,e^{2\pi i n x}  \right\rVert_{L^4} \right) \\
 & \lesssim \frac{1}{N}\, \left( N\, \log N \right)^{1/4} = N^{-3/4}\,(\log N)^{1/4}.
\end{split}
\end{equation}  
On the other hand, using \eqref{EstimateForLittlewoodPaleyPieces}, we bound
\begin{equation}\label{Tail}
\sum_{j \geq i(N)} \lVert \Delta_j R \rVert_{L^4} \lesssim \sum_{j \geq i(N)} A^{-3j/4}\,\left(\log A^j  \right)^{1/4} \simeq \sum_{j \geq i(N)} j^{1/4}\, A^{-3j/4}.
\end{equation}
By H\"older's inequality one can write 
\begin{equation}\label{HolderInequality}
 \sum_{j \geq i(N)} j^{1/4}\, A^{-3j/4} \leq \left( \sum_{j \geq i(N)} j\, A^{-3j/4} \right)^{1/4}\, \left( \sum_{j \geq i(N)} A^{-3j/4}\right)^{3/4} . 
\end{equation} 
The second sum is geometric and equals
\[ \frac{A^{-3i(N)/4}}{1-A^{-3/4}} \simeq A^{-3i(N)/4}. \]
The first one can be computed differentiating power series. Indeed, for $|r| < 1$ we write 
\begin{equation}
\begin{split}
 \sum_{j \geq i(N)} j\, r^j & \leq \sum_{j \geq i(N)} (j+1)\, r^j = \frac{d}{dr}\, \sum_{j \geq i(N)} r^{j+1}  = \frac{d}{dr}\, \frac{r^{ i(N)  + 1}}{1-r} \\
 & =  \frac{ i(N)  \, r^{ i(N) } }{1-r} \left(  1 + \frac{1}{i(N)} \left(1 + \frac{r}{1-r} \right) \right) \\
 & \leq \frac{2}{1-r} \, i(N) \, r^{i(N)}.
\end{split}
\end{equation}  
The last inequality is satisfied when $i(N) > 1 + r/(1-r)$, which holds when $N > A^{1+r/(1-r)}$.
Choosing $r = A^{-3/4}$, from \eqref{Tail} and \eqref{HolderInequality}, we get
\[   \sum_{j \geq i(N)} \lVert \Delta_j R \rVert_{L^4} \lesssim i(N)^{1/4}\, A^{-3i(N)/4} \simeq N^{-3/4}\, (\log N)^{1/4} .  \]
Combining this with 
 \eqref{LittlewoodPaleyBoundAbove} and \eqref{EstimateForJPiece}, 
we finally obtain
\[ \lVert R_{\geq N} \rVert_{L^4} \lesssim N^{-3/4}\, (\log N)^{1/4}, \]
for large enough $N$.
\end{proof}

From Lemmas~\ref{LemmaL2}, \ref{LemmaL4LowerBound} and \ref{LemmaL4UpperBound}, the proof of the high-pass filter part of Theorem~\ref{Thm1} is immediate.

\begin{proof}[Proof of Theorem~\ref{Thm1} (Part 1)]
For large enough $N$ determined by Lemmas~\ref{LemmaL4LowerBound} and \ref{LemmaL4UpperBound}, we may write
\[ F_R(N) = \frac{ \lVert R_{\geq N} \rVert_{L^4(\mathbb{T})}^4 }{ \lVert R_{\geq N} \rVert_{L^2(\mathbb{T})}^4 } \simeq \frac{ N^{-3}\, \log N   }{  N^{-3} } = \log N, \]
and therefore $\displaystyle\lim_{N \to \infty}{F_R(N)} = +\infty$. 
\end{proof}

\section{Intermittency in the sense of structure functions}\label{SECTION_StructureFunctions}

We begin by observing that for $x,\ell \in [0,1]$, making the elementary change of variables $x \mapsto x - \ell/2$, 
the structure functions $S_p(\ell)$ can be described in terms of the increment function   
\[ I(\ell,x)=R(x+\ell/2) - R(x-\ell/2) = 2 \, i \,\sum_{k \geq 1}{\frac{sin(\pi k\ell) }{k} \, \sigma_k\, e_k(x)}, \]
so that if we write $I(\ell) = I(\ell,\cdot)$, we have
\[ S_p(\ell) = \lVert I(\ell) \rVert_p^p.   \]

\begin{lem} \label{LemmaS2}
For $0 < \ell < 1/2$, 
\[ S_2(\ell) \simeq \ell^{3/2}. \]
\end{lem}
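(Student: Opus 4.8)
The plan is to evaluate $S_2(\ell)=\lVert I(\ell)\rVert_2^2$ exactly by Plancherel's theorem, mirroring the computation of $\lVert R_{\geq N}\rVert_2$ in Lemma~\ref{LemmaL2}. Since $I(\ell)$ has Fourier coefficients $2i\,\sigma_k\sin(\pi k\ell)/k$, we get
\[ S_2(\ell)=4\sum_{k\geq 1}\frac{\sin^2(\pi k\ell)}{k^2}\,\sigma_k=4\sum_{n=1}^{\infty}\frac{\sin^2(\pi n^2\ell)}{n^4}, \]
so the lemma reduces to showing $\sum_{n\geq 1}\sin^2(\pi n^2\ell)/n^4\simeq\ell^{3/2}$ for small $\ell$.

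I would split this series at the threshold $n\simeq\ell^{-1/2}$, the scale at which the phase $n^2\ell$ becomes of order one. For the high-frequency tail $n>\ell^{-1/2}$ I bound $\sin^2(\pi n^2\ell)\leq 1$ and compare with an integral, exactly as in Lemma~\ref{LemmaL2}:
\[ \sum_{n>\ell^{-1/2}}\frac{1}{n^4}\lesssim\int_{\ell^{-1/2}}^{\infty}\frac{\mathrm dx}{x^4}\simeq\ell^{3/2}. \]
For the low-frequency part $n\leq(2\ell)^{-1/2}$, the phase lies in $[0,\pi/2]$, so $\sin(\pi n^2\ell)\simeq n^2\ell$ (both inequalities), whence
\[ \sum_{n\leq(2\ell)^{-1/2}}\frac{\sin^2(\pi n^2\ell)}{n^4}\simeq\ell^2\sum_{n\leq(2\ell)^{-1/2}}1\simeq\ell^2\cdot\ell^{-1/2}=\ell^{3/2}, \]
using that the number of integers in $[1,(2\ell)^{-1/2}]$ is comparable to $\ell^{-1/2}$ once $\ell$ is small. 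Adding the two ranges yields the upper bound $S_2(\ell)\lesssim\ell^{3/2}$, while the low-frequency part alone gives the matching lower bound $S_2(\ell)\gtrsim\ell^{3/2}$.

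These estimates hold for $\ell$ below some explicit $\ell_0>0$. To cover the remaining range $\ell\in[\ell_0,1/2)$ I would observe that $\ell\mapsto S_2(\ell)$ is continuous and strictly positive there — the series converges uniformly, and $S_2(\ell)=0$ would force $\sin(\pi\ell)=0$, impossible for $\ell\in(0,1)$ — hence it is bounded above and below by positive constants on the compact interval $[\ell_0,1/2]$, so $S_2(\ell)\simeq 1\simeq\ell^{3/2}$ there as well. I do not expect any genuine obstacle in this argument; the only point requiring a little care is the lower bound for $\sin$, which forces the low-frequency cutoff to be $(2\ell)^{-1/2}$ rather than $\ell^{-1/2}$ so the phase stays in $[0,\pi/2]$ — this is harmless since it affects none of the exponents.
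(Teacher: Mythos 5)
Your proposal is correct and follows essentially the same route as the paper: Plancherel, a split at the threshold $n \leq (2\ell)^{-1/2}$ so that $\sin(\pi n^2\ell)\simeq \pi n^2\ell$ on the low-frequency range, and the trivial bound plus integral comparison on the tail. The closing compactness argument for $\ell\in[\ell_0,1/2)$ is harmless but unnecessary, since these estimates already hold uniformly on the whole range $0<\ell<1/2$.
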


\begin{proof}
By Parseval's theorem,
$$
S_2(\ell) \simeq  \sum_{k \geq 1}{ \frac{\sin^2(\pi k\ell)}{k^2}\, \sigma_k^2 }=\underbrace{\sum_{n \leq (2\ell)^{-1/2}}{\frac{\sin^2(\pi n^2 \ell)}{n^4}}}_{A_2(\ell)}+\underbrace{\sum_{n > (2\ell)^{-1/2}}{\frac{\sin^2(\pi n^2 \ell)}{n^4}}}_{B_2(\ell)}.
$$
Since $\sin(x)/x \simeq 1$ for $|x| \leq \pi/2$, we get
$$
A_2(\ell) \simeq \sum_{n \leq (2\ell)^{-1/2}}{\frac{(\pi\,n^2\,\ell)^2}{n^4}} \simeq \ell^{3/2},
$$
while for the second term we have the upper bound
$$
B_2(\ell) \leq \sum_{n > (2\ell)^{-1/2}}{\frac{1}{n^4}} \simeq \int_{(2\ell)^{-1/2}}^{\infty} \frac{\mathrm dx}{x^4} \simeq  \ell^{3/2}.
$$
\end{proof}

\begin{lem} \label{LemmaS4}
There exists $0 < \ell_0 < 1/2$ such that 
\[ S_4(\ell) \simeq \ell^{3} \log(\ell^{-1}), \qquad \forall  \ell \in (0, \ell_0). \] 
\end{lem}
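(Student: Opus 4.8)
Since $S_4(\ell)=\lVert I(\ell)\rVert_4^4$, the plan is to repeat the Littlewood--Paley argument of Section~\ref{SECTION_HighPassFilters}, applied now to the increment $I(\ell)$ rather than to a high-pass filter. Using the blocks $[A^j,A^{j+1})$ of \eqref{LittlewoodPaleyDecomposition} and the identity $\widehat{I(\ell)}(k)=2i\sin(\pi k\ell)\,\widehat R(k)$, each Littlewood--Paley piece is
\[
\Delta_j I(\ell)(x)=\sum_{A^j\le m^2<A^{j+1}}\frac{2i\sin(\pi m^2\ell)}{m^2}\,e^{2\pi i m^2 x},
\]
that is, the corresponding piece of $R$ multiplied by the Fourier symbol $2i\sin(\pi k\ell)$. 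Let $j_\ell$ be the largest index with $A^{j_\ell+1}\ell\le 1/2$; then $A^{j_\ell}\simeq\ell^{-1}$ (with constant depending only on the fixed $A$ from \eqref{ChoiceOfA}), and this is the ``resonant'' scale at which the logarithm will appear.

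The heart of the matter is a sharp estimate for the resonant-type blocks. If $A^{j+1}\ell\le 1/2$ then $0<\sin(\pi m^2\ell)/(\pi m^2\ell)\simeq 1$ uniformly over the block, so, writing $\sin(\pi m^2\ell)/m^2=\ell\,\beta_m$ with $\beta_m>0$ and $\beta_m\simeq 1$, Plancherel's theorem gives
\[
\lVert\Delta_j I(\ell)\rVert_4^4=16\,\ell^4\!\!\sum_{a^2+b^2=c^2+d^2}\!\!\beta_a\beta_b\beta_c\beta_d\ \simeq\ \ell^4\,\#\{(a,b,c,d):a^2+b^2=c^2+d^2\}\ \simeq\ \ell^4 A^j\log A^j,
\]
all indices running over $[A^{j/2},A^{(j+1)/2})$; the last $\simeq$ is \eqref{EstimateForLittlewoodPaleyPieces} transcribed through Plancherel, since $\lVert\Delta_j R\rVert_4^4\simeq A^{-4j}\,\#\{(a,b,c,d):a^2+b^2=c^2+d^2\}$. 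For an \emph{arbitrary} block we keep only the upper bound: from $|\sin(\pi k\ell)|\le\min(1,\pi k\ell)$ and the elementary fact that $\lVert\sum a_n e_n\rVert_4\le\lVert\sum|b_n| e_n\rVert_4$ whenever $|a_n|\le|b_n|$ (a consequence of $|\widehat{f^2}(k)|\le\widehat{g^2}(k)$ for $\widehat g=|\widehat f|$), together with $\lVert\Delta_j I(\ell)\rVert_4\le 2\lVert\Delta_j R\rVert_4$ and \eqref{EstimateForLittlewoodPaleyPieces}, we obtain
\[
\lVert\Delta_j I(\ell)\rVert_4\lesssim\min\big(\ell(A^j\log A^j)^{1/4},\ A^{-3j/4}(\log A^j)^{1/4}\big).
\]

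For the upper bound on $S_4(\ell)$ one sums over $j$ by the triangle inequality, using $\ell(A^j\log A^j)^{1/4}$ for $j\le j_\ell$ and $A^{-3j/4}(\log A^j)^{1/4}$ for $j>j_\ell$. Up to the benign factor $j^{1/4}$ these are geometric series with ratios $A^{1/4}>1$ and $A^{-3/4}<1$, so each is comparable to its term at $j=j_\ell$, which is $\simeq\ell^{3/4}(\log\ell^{-1})^{1/4}$ because $A^{j_\ell}\simeq\ell^{-1}$ and $\log A^{j_\ell}\simeq\log\ell^{-1}$; hence $S_4(\ell)\lesssim\ell^3\log\ell^{-1}$. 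For the lower bound we invoke the Littlewood--Paley square function exactly as in \eqref{LittlewoodPaleyBoundBelow}: since $\big(\sum_j|\Delta_j I(\ell)|^2\big)^{1/2}\ge|\Delta_{j_\ell}I(\ell)|$ pointwise, $\lVert I(\ell)\rVert_4\gtrsim\lVert\Delta_{j_\ell}I(\ell)\rVert_4$, and $j_\ell$ is admissible for the sharp single-block estimate, so $\lVert\Delta_{j_\ell}I(\ell)\rVert_4^4\gtrsim\ell^4 A^{j_\ell}\log A^{j_\ell}\simeq\ell^3\log\ell^{-1}$, giving $S_4(\ell)\gtrsim\ell^3\log\ell^{-1}$, valid once $\ell$ is small enough that $j_\ell$ lies in the range where \eqref{EstimateForLittlewoodPaleyPieces} holds. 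This fixes $\ell_0$ and proves the lemma.

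I expect the main obstacle to be the transition region $k\simeq\ell^{-1}$, where the symbol $\sin(\pi k\ell)$ is of unit size and changes sign, so that no single coefficient-wise comparison with $R$ keeps both the correct power $\ell^3$ and the logarithm; choosing the resonant block $j_\ell$ just below the scale $1/(2\ell)$ keeps the symbol positive and bounded away from $0$ there, and treating the lower bound through the Littlewood--Paley square function bypasses the potential cancellation between the low- and high-frequency parts of $I(\ell)$, whose $L^4$ norms are both of order $\ell^{3/4}(\log\ell^{-1})^{1/4}$.
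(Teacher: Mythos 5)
Your argument is correct, and it reaches the same two sharp bounds as the paper, but by a genuinely different route. The paper does not run a full Littlewood--Paley analysis of $I(\ell)$: it splits $I(\ell)$ into just two pieces at the single frequency $\ell^{-1}/2$. For the low piece it applies Plancherel directly over the whole range $k\le \ell^{-1}/2$ (where $\sin(\pi k\ell)/(\pi k\ell)\simeq 1$), reducing to $\ell^4\lVert\sum_{k\le\sqrt{\ell^{-1}/2}}e_{k^2}\rVert_4^4$ and invoking Zalcwasser once; for the high piece it bounds the sines by $1$ and reuses Lemma~\ref{LemmaL4UpperBound} verbatim via \eqref{NumberTheory}. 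The most interesting divergence is the lower bound: where you go through the Littlewood--Paley square function as in \eqref{LittlewoodPaleyBoundBelow} and extract the single resonant block $j_\ell$, the paper uses a positivity trick, namely that all Fourier coefficients of $I(\ell)^2$ at frequencies $k\le\ell^{-1}$ are sums of terms of one sign, so $\lVert I_{\le\ell^{-1}/2}(\ell)\rVert_4^4=\lVert I_{\le\ell^{-1}/2}(\ell)^2\rVert_2^2\le\lVert (I(\ell)^2)_{\le\ell^{-1}}\rVert_2^2\le\lVert I(\ell)\rVert_4^4$, with no appeal to the Littlewood--Paley theorem at all. Your approach buys uniformity (every sub-resonant dyadic block satisfies the sharp estimate $\ell^4A^j\log A^j$, which makes the location of the logarithm transparent) at the cost of re-importing the square-function machinery and the block estimate \eqref{EstimateForLittlewoodPaleyPieces}; the paper's approach is shorter and more self-contained for the lower bound, but captures the full range $[1,\ell^{-1}/2]$ in one stroke rather than block by block. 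Two small points you should make explicit if you write this up: the sharp block estimate, hence the count $\#\{a^2+b^2=c^2+d^2\}\simeq A^j\log A^j$, only holds for $j$ large enough that Zalcwasser's theorem applies, so the finitely many low blocks in your upper-bound sum need the trivial bound $\lVert\Delta_jI(\ell)\rVert_4\lesssim_j\ell$ instead; and the coefficient-domination fact $\lVert\sum a_ne_n\rVert_4\le\lVert\sum|b_n|e_n\rVert_4$ is specific to even integer exponents (it is exactly the Plancherel-plus-triangle-inequality computation), which is worth saying since it is the only reason the comparison $\lVert\Delta_jI(\ell)\rVert_4\le 2\lVert\Delta_jR\rVert_4$ is legitimate.
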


\begin{proof}
We decompose $I(\ell)$ in low and high frequencies so that by the triangle inequality we get
$$
S_4^{1/4}(\ell) = \lVert I(\ell) \rVert_4 = \lVert I_{\leq \ell^{-1}/2}(\ell)+I_{>\ell^{-1}/2}(\ell) \rVert_4 \leq \lVert I_{\leq \ell^{-1}/2}(\ell) \rVert_4 +  \lVert I_{> \ell^{-1}/2}(\ell) \rVert_4.
$$
On the other hand, the Fourier coefficients of $I(\ell)$ are positive for every $1 \leq k \leq \ell^{-1}$, so by Parseval's theorem we deduce
\[
\lVert I_{\leq \ell^{-1}/2}(\ell)  \rVert_4^4 = \lVert I_{\leq \ell^{-1}/2}(\ell)^2 \rVert_2^2 \leq \lVert (I^2)_{\leq \ell^{-1}}(\ell) \rVert_2^2  \leq \lVert I(\ell)^2 \rVert_2^2 = \lVert I(\ell) \rVert_4^4 = S_4(\ell).
\]
In short, we have
\[  \lVert I_{\leq \ell^{-1}/2} (\ell) \rVert_4 \leq  S_4^{1/4}(\ell) \leq  \lVert I_{\leq \ell^{-1}/2}(\ell) \rVert_4 +  \lVert I_{> \ell^{-1}/2}(\ell) \rVert_4,  \]
so we look for both upper and lower estimates for $\lVert I_{\leq \ell^{-1}/2} (\ell) \rVert_4$, but an upper bound suffices for $\lVert I_{> \ell^{-1}/2}(\ell) \rVert_4$.

By Parseval's theorem, we have
$$
\lVert I_{\leq \ell^{-1}/2} (\ell)^2\rVert^2_2=\sum_{k=1}^{\ell^{-1}}{b_k^2}, \qquad \qquad 
b_k=\sum_m{\frac{\sin(\pi m\ell) \sin (\pi (k-m)\ell)}{m(k-m)} \sigma_m \sigma_{k-m}}.
$$
where the index $m$ satisfies $1 \leq m \leq \ell^{-1}/2$ and $1 \leq  k-m \leq \ell^{-1}/2$. 
As above, since $1/2 \leq \sin(x)/x \leq 1$ for $|x| \leq \pi/2$, we get 
$$
b_k \simeq \ell^2\, \sum_m{ \sigma_m \sigma_{k-m}}.
$$
Consequently, with the same restrictions on $m$ as above,
\[
\lVert I_{\leq \ell^{-1}/2}(\ell)^2 \rVert^2_2 \simeq \ell^4 \, \sum_{k=1}^{\ell^{-1}} \left( \sum_m{ \sigma_m \sigma_{k-m}} \right)^2 = \ell^4\, \left\lVert \left(\sum_{k=1}^{\ell^{-1}/2}\sigma_k e_k \right)^2 \right\rVert_2^2 = \ell^4\, \left\lVert \sum_{k=1}^{\sqrt{\ell^{-1}/2}} e_{k^2}  \right\rVert_4^4
\]
By Zalcwasser's theorem in Appendix~\ref{APPENDIX_Zalcwasser}, for $\ell$ small enough we get
$$
\lVert I_{\leq \ell^{-1}/2}(\ell) \rVert^4_4 = \lVert I_{\leq \ell^{-1}/2}(\ell)^2 \rVert^2_2 \simeq \ell^4 \, \ell^{-1} \, \log(\sqrt{\ell^{-1}}) \simeq \ell^3\,  \log(\ell^{-1}).
$$
For the high frequency piece, using again Parseval's theorem, we can write
$$
\lVert I_{> \ell^{-1}/2}(\ell)^2 \rVert^2_2 = \sum_{k >l^{-1}}{\beta_k^2}, \qquad \text{where} \qquad 
\beta_k=\sum_{m,\ k-m > \ell^{-1}/2}{\frac{\sin(\pi m\ell) \sin (\pi (k-m)\ell)}{m(k-m)} \sigma_m \sigma_{k-m}}.
$$
Bounding the sine trivially and using \eqref{NumberTheory} with $N = \ell^{-1}/2$, we get 
\[  \lVert I_{> \ell^{-1}/2}(\ell)^2 \rVert^2_2 \leq  \sum_{k >l^{-1}} \left( \sum_{m,\ k-m > \ell^{-1}/2}{\frac{\sigma_m \sigma_{k-m}}{m(k-m)} } \right)^2 = \lVert (R_{\geq \ell^{-1}/2})^2 \rVert_2^2 = \lVert R_{\geq \ell^{-1}/2} \rVert_4^4 , \]
so by Lemma~\ref{LemmaL4UpperBound} we obtain that
\[   \lVert I_{> \ell^{-1}/2}(\ell) \rVert^4_4  \lesssim (\ell^{-1})^{-3}\, \log{\ell^{-1}} \simeq \ell^3\,\log (\ell^{-1}). \]
\end{proof}
\begin{rmk}
By using Parseval's theorem as in the proof above, one can also compute the asymptotic behaviour of $S_{2k}$ for every $k\in\mathbb{N}$. However, in view of Zalcwasser's Theorem~\ref{APPENDIX_Zalcwasser}, we expect to have a logarithmic term only for the critical $2 k =4$. For $2k > 4$, one expects to get the pure power law $S_{2k} = \ell^{1+k}$ predicted by Jaffard \cite{Jaffard} in \eqref{ExponentJaffard}.
\end{rmk}

\begin{proof}[Proof of Theorem~\ref{Thm1} (Part 2)]
By Lemmas~\ref{LemmaS2} and \ref{LemmaS4},  for small enough $\ell$ we get
\[ G_R(\ell) \simeq \frac{\ell^3\,\log (\ell^{-1})}{(\ell^{3/2})^2} = \log (\ell^{-1}).   \]
\end{proof}

\appendix

%\section{ }\label{APPENDIX}

\section{The Littlewood-Paley decomposition}\label{APPENDIX_LittlewoodPaley}

We recall here the following classical result \cite[Theorem 3]{LittlewoodPaley}.
\begin{thm}\label{LittlewoodPaleyTheoremAppendix}
Let $p>1$, $A>1$ and  $ f(x) = \sum_{n \in \mathbb{Z}}{ a_n e^{2\pi i n x} } $ a function in $L^p(0,1)$. Consider the decomposition
\[ f(x) = \sum_{k=1}^{\infty}{\Delta_kf(x)} \]
such that
\[ \Delta_1f(x) = \sum_{|n| \leq A}{a_ne^{2\pi i n x}} ,  \qquad  \qquad  
\Delta_kf(x) = \sum_{ A^k < |n| \leq A^{k+1} }{ a_ne^{2\pi i n x} }, \qquad k \geq 2. \]
Then, there exist constants $B_1,B_2 > 0$ depending on $p$ such that 
\[ B_1 \leq \frac{ \lVert \left( \sum_{k=1}^{\infty}{|\Delta_kf|^2} \right)^{1/2} \rVert_{L^p} }{ \lVert f \rVert_{L^p} }  \leq B_2 .\]
\end{thm}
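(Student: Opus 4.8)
The plan is to prove the two inequalities separately, both reduced to a single ingredient: a uniform $L^p$ bound for Fourier multiplier operators that are constant on the $A$-adic blocks $I_1=\{|n|\le A\}$ and $I_k=\{A^k<|n|\le A^{k+1}\}$ for $k\ge2$, which partition $\mathbb Z$. For the upper estimate I would randomize: let $(\eps_k(\omega))_{k\ge1}$ be the Rademacher functions on $[0,1]$, and for each $\omega$ let $T_\omega$ be the multiplier with symbol $m_\omega(n)=\eps_k(\omega)$ for $n\in I_k$. Each $m_\omega$ is bounded by $1$ and has bounded variation on every dyadic frequency block with a bound depending only on $A$ (a dyadic block meets only boundedly many $I_k$, and $m_\omega$ is $\pm1$ on each $I_k$), so the Marcinkiewicz multiplier theorem — proved directly on $\mathbb T$ or transferred from the line — gives $\|T_\omega f\|_{L^p}\le C_p\|f\|_{L^p}$ with $C_p$ independent of $\omega$. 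Raising to the $p$-th power, integrating in $\omega$, and applying Khintchine's inequality pointwise in $x$ (comparing $\int_0^1|\sum_k\eps_k(\omega)\Delta_kf(x)|^p\,d\omega$ with $(\sum_k|\Delta_kf(x)|^2)^{p/2}$) together with Fubini yields
\[ \Big\| \Big( \sum_{k} |\Delta_k f|^2 \Big)^{1/2} \Big\|_{L^p}^p \simeq_p \int_0^1 \|T_\omega f\|_{L^p}^p\, d\omega \le C_p^p\, \|f\|_{L^p}^p, \]
which is the upper bound with $B_2$ depending only on $p$.

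For the lower bound I would use duality. Since the $I_k$ partition $\mathbb Z$, for trigonometric polynomials $f,g$ one has $\langle f,g\rangle=\sum_k\langle \Delta_kf,\Delta_kg\rangle$, and Cauchy–Schwarz in $k$ followed by Hölder in $x$ gives
\[ |\langle f,g\rangle| \le \int_{\mathbb T} \Big( \sum_k |\Delta_k f|^2 \Big)^{1/2} \Big( \sum_k |\Delta_k g|^2 \Big)^{1/2} \le \Big\| \Big( \sum_k |\Delta_k f|^2 \Big)^{1/2} \Big\|_{L^p}\, \Big\| \Big( \sum_k |\Delta_k g|^2 \Big)^{1/2} \Big\|_{L^{p'}}. \]
Applying the upper bound already obtained, now at the dual exponent $p'\in(1,\infty)$, the last factor is $\lesssim_p\|g\|_{L^{p'}}$; taking the supremum over $\|g\|_{L^{p'}}\le1$ produces $\|f\|_{L^p}\lesssim_p\|(\sum_k|\Delta_kf|^2)^{1/2}\|_{L^p}$, i.e. an admissible $B_1$. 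A routine density argument then extends both inequalities from trigonometric polynomials to all of $L^p(0,1)$.

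The main obstacle is the uniform multiplier estimate for the \emph{rough} step-function symbols $m_\omega$: one must check that an $A$-adic partition meets the dyadic bounded-variation hypothesis of the Marcinkiewicz theorem with constants depending only on $A$, or else replace the sharp cutoffs by smooth ones adapted to the blocks, handle the smooth version via the Mihlin–Hörmander theorem and Khintchine, and recover the sharp version by summing a geometric series of error terms. Everything else — $L^2$ orthogonality of the blocks, Khintchine's inequality, and duality — is soft. Since this is a classical statement, in the body of the paper we simply cite \cite{LittlewoodPaley}.
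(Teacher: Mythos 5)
Your argument is essentially correct, but note that the paper does not prove this statement at all: it is quoted verbatim as Theorem 3 of the original Littlewood--Paley paper \cite{LittlewoodPaley}, so there is no internal proof to compare against, and what you have written is the classical textbook proof of that cited result. Both halves are sound: for the upper bound, the randomized multipliers $m_\omega$ are $\pm1$ on each $A$-adic block, a dyadic frequency block meets at most $2+\log 2/\log A$ of the $A$-adic blocks, so the Marcinkiewicz hypothesis holds uniformly in $\omega$ with a constant depending only on $A$, and Khintchine plus Fubini then converts $\int_0^1\lVert T_\omega f\rVert_{L^p}^p\,\mathrm d\omega$ into the square function; for the lower bound, block-orthogonality of $\langle f,g\rangle$, Cauchy--Schwarz in $k$, H\"older in $x$, and the upper bound at the dual exponent $p'$ give an admissible $B_1$. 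Two small caveats. First, the constants you produce depend on $A$ as well as on $p$ (they degenerate as $A\to1^+$, since the number of $A$-adic blocks per dyadic block grows); this is harmless here because $A$ is ultimately fixed by \eqref{ChoiceOfA}, but it is worth recording since the statement claims dependence on $p$ only. Second, the blocks $I_1=\{|n|\le A\}$ and $I_k=\{A^k<|n|\le A^{k+1}\}$, $k\ge2$, do \emph{not} partition $\mathbb Z$ as you assert: the range $A<|n|\le A^2$ is missed. This is an indexing slip inherited from the appendix statement itself; the decomposition actually used in Section~\ref{SECTION_HighPassFilters} (with $\Delta_0$ on $1\le n<A$ and $\Delta_j$ on $A^j\le n<A^{j+1}$) is the consistent one, and your argument applies to it verbatim. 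With that correction and the routine density step, the proof is complete.
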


\section{A theorem of Zalcwasser}\label{APPENDIX_Zalcwasser}

The following result from \cite[Equation (57)]{Zalcwasser} is crucial to our proof.
\begin{thm}\label{ZalcwasserTheoremAppendix}
Let $p > 0$. Then, there exist $M_p > 1$ and constants $C_p > c_p>0$ such that for every $N > M_p$, 
\begin{equation}\label{ZalcwasserEstimate}
c_p\,\psi_p(N) \leq \int_0^1{\left| \sum_{m=1}^{N}{e^{2\pi i m^2 x}} \right|^p\,\mathrm dx } \leq C_p\,\psi_p(N)
\end{equation}
where
\begin{equation}\label{DEFINITION_Psi}
\psi_p(N) = 
\begin{cases}
N^{p/2}, & p < 4, \\
N^2\log{N}, & p = 4 \\
N^{p-2}, & p > 4.
\end{cases}
\end{equation}
\end{thm}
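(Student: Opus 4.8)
The plan is to treat this by the Hardy-Littlewood circle method; write $g_N(x)=\sum_{m=1}^N e^{2\pi i m^2 x}$, and note that the only case needed in this paper, $p=4$, is also the one where the logarithm appears. \emph{Step 1: near-rational asymptotics.} The first ingredient is the classical description of $g_N$ near a rational, obtained by splitting $m$ into residue classes modulo $q$ and applying Poisson summation (or van der Corput's lemma) to each resulting smooth sum: if $x=a/q+\beta$ with $\gcd(a,q)=1$ and $1\le q\le N$, then
\begin{equation*}
g_N(x)=\frac{S(a,q)}{q}\int_0^N e^{2\pi i\beta t^2}\,\mathrm dt+\mathrm{Error}(a,q,\beta),
\end{equation*}
where $S(a,q)=\sum_{r=1}^q e^{2\pi i a r^2/q}$ is the quadratic Gauss sum, for which $|S(a,q)|\simeq q^{1/2}$. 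Combined with the elementary two-sided bound $\bigl|\int_0^N e^{2\pi i\beta t^2}\,\mathrm dt\bigr|\simeq N(1+N^2|\beta|)^{-1/2}$ — the lower bound holding because the Fresnel (Cornu) curve leaves the origin and never returns — this gives the main-term size $\simeq N q^{-1/2}(1+N^2|\beta|)^{-1/2}$, while the error is of lower order, at worst $O\bigl(q^{1/2}(\log q)(1+N^2|\beta|)^{1/2}\bigr)$, hence $O(N^{1/2}\log N)$ on the Farey arc $|\beta|\le 1/(qN)$.

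\emph{Step 2: upper bound.} By Dirichlet's theorem the Farey arcs $\mathfrak{M}(a,q)=\{\,|x-a/q|\le 1/(qN)\,\}$, over $q\le N$ and $\gcd(a,q)=1$, cover $[0,1]$, so
\begin{equation*}
\int_0^1|g_N|^p \lesssim \sum_{q\le N}\ \sum_{\gcd(a,q)=1}\ \int_{|\beta|\le 1/(qN)}\Bigl(\frac{N^p}{q^{p/2}(1+N^2|\beta|)^{p/2}}+\bigl(N^{1/2}\log N\bigr)^p\Bigr)\,\mathrm d\beta .
\end{equation*}
For the main term, the substitution $u=N^2\beta$ gives $\int_{|\beta|\le 1/(qN)}(1+N^2|\beta|)^{-p/2}\,\mathrm d\beta\simeq N^{-2}$ when $p>2$ (with a factor $\log(N/q)$ at $p=2$, and replaced by $N^{-1-p/2}q^{p/2-1}$ when $p<2$), and since $\sum_{\gcd(a,q)=1}1=\varphi(q)\le q$ one is left with $N^{p-2}\sum_{q\le N}q^{1-p/2}$, which is $\simeq N^{p/2}$ for $p<4$, $\simeq N^2\log N$ for $p=4$, and $\simeq N^{p-2}$ for $p>4$ — exactly $\psi_p(N)$. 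The error contributes $\lesssim(N^{1/2}\log N)^p$, which is negligible compared with $\psi_p(N)$ only after refining the dissection: one takes the Farey level slightly below $N$, splits $q$ dyadically, and bounds the high-$q$ ranges using the crude Weyl estimate $|g_N(x)|\lesssim Nq^{-1/2}+q^{1/2}$ together with the $\beta$-decay, which is the standard technique in $L^p$ estimates for Weyl sums. (When $p\le 2$ the cleaner route is Jensen's inequality from $\int_0^1|g_N|^2=N$.)

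\emph{Step 3: lower bound.} Keeping only the major arcs with $q\le N^{1/2}$, where the main term of Step~1 dominates the error uniformly on the whole arc $|\beta|\le 1/(qN)$, the arcs are pairwise disjoint and the same computation gives
\begin{equation*}
\int_0^1|g_N|^p \gtrsim \sum_{q\le N^{1/2}}\varphi(q)\,\frac{N^p}{q^{p/2}}\int_{|\beta|\le 1/(qN)}\frac{\mathrm d\beta}{(1+N^2|\beta|)^{p/2}}\gtrsim N^{p-2}\sum_{q\le N^{1/2}}q^{1-p/2},
\end{equation*}
which has the order $\psi_p(N)$; in particular $\sum_{q\le N^{1/2}}q^{-1}\simeq\log N$ produces the logarithm at $p=4$. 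It is essential to integrate over the \emph{whole} Farey arc $|\beta|\le 1/(qN)$ rather than a shorter one, since truncating it would cost exactly this logarithm. The main obstacle is Step~1 together with the error bookkeeping in Step~2: one must control $\mathrm{Error}(a,q,\beta)$, and choose the dissection level, so that the main term genuinely governs $g_N$ on the arcs that matter. Finally, for the single case $p=4$ used here one can bypass the circle method altogether, since $\int_0^1|g_N|^4$ is exactly the number of solutions of $m_1^2+m_2^2=m_3^2+m_4^2$ with $1\le m_i\le N$, which, writing $m_1^2-m_3^2=m_4^2-m_2^2$ and counting factorizations, equals $\simeq N^2\log N$ by a classical divisor estimate.
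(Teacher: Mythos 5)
The paper does not prove this statement at all: Theorem~\ref{ZalcwasserTheoremAppendix} is quoted as a known result with a pointer to Zalcwasser's paper (Equation (57) there), so your argument is necessarily an independent route rather than a variant of anything in the text. That said, your circle-method outline is the standard modern proof of these $L^p$ asymptotics and the exponent arithmetic is correct: the main term $Nq^{-1/2}(1+N^2|\beta|)^{-1/2}$ integrated over Farey arcs produces $N^{p-2}\sum_{q\le N}q^{1-p/2}$, whose three regimes at $p<4$, $p=4$, $p>4$ are exactly $\psi_p(N)$, and the logarithm at $p=4$ comes from the harmonic sum over $q$, as you say.

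Two points deserve flagging. First, $|S(a,q)|\simeq q^{1/2}$ is false as stated: the quadratic Gauss sum vanishes when $q\equiv 2\ (\mathrm{mod}\ 4)$. This is harmless for the upper bound ($|S(a,q)|\le\sqrt{2q}$ always) and for the lower bound you need only restrict the sum to odd $q$, which does not change the order of $\sum q^{1-p/2}$; but the claim should be corrected. Second, and more substantively, the error bookkeeping in Step~2 is not actually carried out: as you yourself observe, summing $(N^{1/2}\log N)^p$ over the full dissection gives $N^{p/2}(\log N)^p$, which exceeds $\psi_p(N)$ for every $p\le 4$ (at $p=4$ it loses a factor $(\log N)^3$ against the target $N^2\log N$). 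Closing this via a refined dissection and Weyl's inequality on the large-$q$ ranges is standard but is genuine work, so for general $p$ your proof is an outline rather than a complete argument. Fortunately, your closing remark rescues the only case the paper uses: for $p=4$ the integral counts solutions of $m_1^2+m_2^2=m_3^2+m_4^2$ with $1\le m_i\le N$, and the classical estimate $\sum_{n\le X}r(n)^2\simeq X\log X$ (applied with $X=N^2$ for the lower bound and $X=2N^2$ for the upper) gives $\simeq N^2\log N$ directly, with no arcs and no error terms. I would lead with that elementary argument, since it is complete, and present the circle method only as the route to the remaining exponents.
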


\section*{Acknowledgments}
The authors would like to thank Valeria Banica and Luis Vega, who made this collaboration possible through funding and advice, and Francesco Fanelli, Evelyne Miot and Dario Vincenzi, for several useful discussions.

AB and VVDR have been supported by the IUF grant of Valeria Banica. Moreover, they would like to acknowledge the support of ANR ISDEEC. 
DE has been supported by the Ministry of Education, Culture and Sport (Spain) under grant FPU15/03078, as well as by Simons Foundation Collaboration Grant on Wave Turbulence (Nahmod's Award ID 651469). 
DE and VVDR have been supported by the ERCEA under the Advanced Grant 2014 669689 - HADE 
and also by the Basque Government through the BERC 2018-2021 program and by the Ministry of Science, 
Innovation and Universities: BCAM Severo Ochoa accreditation SEV-2017-0718.
VVDR is also supported by NSF grant DMS 1800241.

\bibliographystyle{acm}
\bibliography{BEV_Bibliography}

\begin{thebibliography}{10}

\bibitem{BanicaVega2012}
{\sc Banica, V., and Vega, L.}
\newblock Selfsimilar solutions of the binormal flow and their stability.
\newblock In {\em Singularities in mechanics: formation, propagation and
  microscopic description}, vol.~38 of {\em Panor. Synth\`eses}. Soc. Math.
  France, Paris, 2012, pp.~1--35.

\bibitem{BanicaVega2015}
{\sc Banica, V., and Vega, L.}
\newblock The initial value problem for the binormal flow with rough data.
\newblock {\em Ann. Sci. \'{E}c. Norm. Sup\'{e}r. (4) 48}, 6 (2015),
  1423--1455.

\bibitem{BanicaVega2018}
{\sc Banica, V., and Vega, L.}
\newblock Evolution of polygonal lines by the binormal flow.
\newblock {\em Ann. PDE 6}, 1 (2020), Paper No. 6, 53.

\bibitem{BanicaVega2020}
{\sc Banica, V., and Vega, L.}
\newblock Riemann's non-differentiable function and the binormal curvature
  flow.
\newblock {\em Preprint\/} (2020).

\bibitem{BarralSeuret2020}
{\sc Barral, J., and Seuret, S.}
\newblock Besov spaces in multifractal environment and the frisch-parisi
  conjecture.
\newblock {\em Preprint\/} (2020).

\bibitem{BerryMarzoliSchleich}
{\sc Berry, M., Marzoli, I., and Schleich, W.}
\newblock Quantum carpets, carpets of light.
\newblock {\em Phys. {W}orld 14}, 6 (2001), 39--46.

\bibitem{Boritchev2013}
{\sc Boritchev, A.}
\newblock Sharp estimates for turbulence in white-forced generalised {B}urgers
  equation.
\newblock {\em Geom. Funct. Anal. 23}, 6 (2013), 1730--1771.

\bibitem{Boritchev2014}
{\sc Boritchev, A.}
\newblock Decaying turbulence in the generalised {B}urgers equation.
\newblock {\em Arch. Ration. Mech. Anal. 214}, 1 (2014), 331--357.

\bibitem{Boritchev2016}
{\sc Boritchev, A.}
\newblock Multidimensional potential {B}urgers turbulence.
\newblock {\em Comm. Math. Phys. 342}, 2 (2016), 441--489.

\bibitem{ChamizoCordoba}
{\sc Chamizo, F., and C\'{o}rdoba, A.}
\newblock Differentiability and dimension of some fractal {F}ourier series.
\newblock {\em Adv. {M}ath. 142}, 2 (1999), 335--354.

\bibitem{DaubechiesLagarias}
{\sc Daubechies, I., and Lagarias, J.~C.}
\newblock On the thermodynamic formalism for multifractal functions.
\newblock {\em Rev. Math. Phys. 6}, 5A (1994), 1033--1070.
\newblock Special issue dedicated to Elliott H. Lieb.

\bibitem{HozGarciaCerveraVega}
{\sc {de la Hoz}, F., Garc\'{\i}a-Cervera, C.~J., and Vega, L.}
\newblock A numerical study of the self-similar solutions of the
  {S}chr\"{o}dinger map.
\newblock {\em SIAM J. Appl. Math. 70}, 4 (2009), 1047--1077.

\bibitem{HozVega}
{\sc {de la Hoz}, F., and Vega, L.}
\newblock Vortex filament equation for a regular polygon.
\newblock {\em Nonlinearity 27}, 12 (2014), 3031--3057.

\bibitem{HozVega2018}
{\sc de~la Hoz, F., and Vega, L.}
\newblock On the relationship between the one-corner problem and the
  {$M$}-corner problem for the vortex filament equation.
\newblock {\em J. Nonlinear Sci. 28}, 6 (2018), 2275--2327.

\bibitem{Duistermaat}
{\sc Duistermaat, J.~J.}
\newblock Self-similarity of ``{R}iemann's nondifferentiable function''.
\newblock {\em Nieuw {A}rch. {W}isk. 9}, 3 (1991), 303--337.

\bibitem{Eceizabarrena0}
{\sc Eceizabarrena, D.}
\newblock Some geometric properties of {R}iemann's non-differentiable function.
\newblock {\em C. R. Math. Acad. Sci. Paris 357}, 11-12 (2019), 846--850.

\bibitem{Eceizabarrena2}
{\sc Eceizabarrena, D.}
\newblock Geometric differentiability of {R}iemann's non-differentiable
  function.
\newblock {\em Adv. Math. 366\/} (2020), 107091.

\bibitem{Eceizabarrena1}
{\sc Eceizabarrena, D.}
\newblock On the {H}ausdorff dimension of {R}iemann's non-differentiable
  function.
\newblock {\em Trans. Amer. Math. Soc.\/} (2021).
\newblock To appear.

\bibitem{Eyink}
{\sc Eyink, G.~L.}
\newblock Besov spaces and the multifractal hypothesis.
\newblock {\em J. Statist. Phys. 78}, 1-2 (1995), 353--375.
\newblock Papers dedicated to the memory of Lars Onsager.

\bibitem{Frisch}
{\sc Frisch, U.}
\newblock {\em Turbulence}.
\newblock Cambridge University Press, Cambridge, 1995.
\newblock The legacy of A. N. Kolmogorov.

\bibitem{FrischParisi}
{\sc Frisch, U., and Parisi, G.}
\newblock On the singularity structure of fully developed turbulence.
\newblock In {\em Proc. {E}nrico {F}ermi {I}nternational {S}ummer {S}chool in
  {P}hysics\/} (1985), pp.~84--88.
\newblock Appendix to `Fully developed turbulence and intermittency', by U.
  Frisch.

\bibitem{Gerver1}
{\sc Gerver, J.}
\newblock The differentiability of the {R}iemann function at certain rational
  multiples of {$\pi $}.
\newblock {\em Amer. J. Math. 92\/} (1970), 33--55.

\bibitem{Gerver2}
{\sc Gerver, J.}
\newblock More on the differentiability of the {R}iemann function.
\newblock {\em Amer. J. Math. 93\/} (1971), 33--41.

\bibitem{GutierrezRivasVega}
{\sc Guti\'{e}rrez, S., Rivas, J., and Vega, L.}
\newblock Formation of singularities and self-similar vortex motion under the
  localized induction approximation.
\newblock {\em Comm. Partial Differential Equations 28}, 5-6 (2003), 927--968.

\bibitem{Hardy}
{\sc Hardy, G.~H.}
\newblock Weierstrass's non-differentiable function.
\newblock {\em Trans. Amer. Math. Soc. 17}, 3 (1916), 301--325.

\bibitem{Hasimoto}
{\sc Hasimoto, H.}
\newblock A soliton on a vortex filament.
\newblock {\em J. Fluid Mech. 51}, 3 (1972), 477--485.

\bibitem{HolschneiderTchamitchian}
{\sc Holschneider, M., and Tchamitchian, P.}
\newblock Pointwise analysis of {R}iemann's ``nondifferentiable'' function.
\newblock {\em Invent. Math. 105}, 1 (1991), 157--175.

\bibitem{Jaffard1992}
{\sc Jaffard, S.}
\newblock Sur la dimension de {H}ausdorff des points singuliers d'une fonction.
\newblock {\em C. R. Acad. Sci. Paris S\'{e}r. I Math. 314}, 1 (1992), 31--36.

\bibitem{Jaffard}
{\sc Jaffard, S.}
\newblock The spectrum of singularities of {R}iemann's function.
\newblock {\em Rev. Mat. Iberoamericana 12}, 2 (1996), 441--460.

\bibitem{JaffardMultifractal}
{\sc Jaffard, S.}
\newblock Multifractal formalism for functions. {I}. {R}esults valid for all
  functions.
\newblock {\em SIAM J. Math. Anal. 28}, 4 (1997), 944--970.

\bibitem{JaffardMultifractal2}
{\sc Jaffard, S.}
\newblock Multifractal formalism for functions. {II}. {S}elf-similar functions.
\newblock {\em SIAM J. Math. Anal. 28}, 4 (1997), 971--998.

\bibitem{JaffardOldFriends}
{\sc Jaffard, S.}
\newblock Old friends revisited: the multifractal nature of some classical
  functions.
\newblock {\em J. Fourier Anal. Appl. 3}, 1 (1997), 1--22.

\bibitem{Jaffard2004}
{\sc Jaffard, S.}
\newblock Wavelet techniques in multifractal analysis.
\newblock In {\em Fractal geometry and applications: a jubilee of {B}eno\^{\i}t
  {M}andelbrot, {P}art 2}, vol.~72 of {\em Proc. Sympos. Pure Math.} Amer.
  Math. Soc., Providence, RI, 2004, pp.~91--151.

\bibitem{JerrardSmets}
{\sc Jerrard, R.~L., and Smets, D.}
\newblock On the motion of a curve by its binormal curvature.
\newblock {\em J. Eur. Math. Soc. (JEMS) 17}, 6 (2015), 1487--1515.

\bibitem{ChicagoPeople}
{\sc Kleckner, D., Scheeler, M.~W., and Irvine, W. T.~M.}
\newblock The life of a vortex knot.
\newblock {\em Phys. Fluids 26}, 9 (2014), 091105.

\bibitem{Kolmo}
{\sc Kolmogoroff, A.~N.}
\newblock Dissipation of energy in the locally isotropic turbulence.
\newblock {\em C. R. (Doklady) Acad. Sci. URSS (N.S.) 32\/} (1941), 16--18.

\bibitem{KumarVideos}
{\sc Kumar, S.}
\newblock \url{https://sites.google.com/view/skumar1712/simulation-videos}.
  Visited on May 27, 2021.

\bibitem{Landau}
{\sc Landau, L.~D., and Lifshitz, E.~M.}
\newblock {\em Course of theoretical physics. {V}ol. 6. Fluid mechanics},
  second~ed.
\newblock Pergamon Press, Oxford, 1987.

\bibitem{LittlewoodPaley}
{\sc Littlewood, J.~E., and Paley, R. E. A.~C.}
\newblock Theorems on {F}ourier series and power series.
\newblock {\em J. London Math. Soc. s1-6}, 3 (1931), 230--233.

\bibitem{Rayleigh}
{\sc Rayleigh, L.}
\newblock {XXV}. {O}n copying diffraction-gratings, and on some phenomena
  connected therewith.
\newblock {\em London, Edinburgh Dublin Philos. Mag. J. Sci. 11}, 67 (1881),
  196--205.

\bibitem{Schwarz}
{\sc Schwarz, K.~W.}
\newblock Three-dimensional vortex dynamics in {superfluid He}4: Line-line and
  line-boundary interactions.
\newblock {\em Phys. Rev. B 31}, 9 (1985), 5782--5804.

\bibitem{Talbot}
{\sc Talbot, H.~F.}
\newblock {LXXVI}. {F}acts relating to optical science. {N}o. {IV}.
\newblock {\em London, Edinburgh Dublin Philos. Mag. J. Sci. 9}, 56 (1836),
  401--407.

\bibitem{Weierstrass}
{\sc Weierstrass, K.}
\newblock {\"U}ber continuirliche {F}unctionen eines reellen {A}rguments, die
  f\"ur keinen {W}erth des letzteren einen bestimmten {D}ifferentialquotienten
  besitzen.
\newblock In {\em Mathematische {W}erke.}, vol.~2. Cambridge University Press,
  2013, pp.~71--74.

\bibitem{Zalcwasser}
{\sc Zalcwasser, Z.}
\newblock Sur les polynomes associ{\'{e}}s aux fonctions modulaires.
\newblock {\em Studia Math. 7}, 1 (1938), 16--35.

\end{thebibliography}

\end{document}